\documentclass[12pt]{amsproc}
  \usepackage{latexsym} 
  \usepackage[all]{xy}
  \usepackage{amsfonts} 
  \usepackage{amsthm} 
  \usepackage{amsmath} 
  \usepackage{amssymb}
  \usepackage{pifont}  
  \usepackage{enumerate}
  \xyoption{2cell}

 \usepackage{tikz}
\usetikzlibrary{arrows}


  \def\su#1{{\sp{[#1]}}}

  \def\<{{\langle}} 
  \def\>{{\rangle}} 
   
  \def\la{{\triangleright}}

  \def\cross{{>\!\!\!\triangleleft\;}} 
  \def\note#1{{}} 
 \def\can{{\rm \textsf{can}}} 

 \def\tw{{\rm \textsf{flip}}} 

  \def\note#1{} 

    \def\cL{{\mathcal L}}

  \def\cA{{\mathcal A}} 
  \def\cB{{\mathcal B}} 
  \def\cD{{\mathcal D}}

   \def\cH{{\mathcal H}}   
\def\cK{{\mathcal K}}  
  \def\cO{{\mathcal O}}
   
     \def\cT{{\mathcal T}}   
  \def\cV{{\mathfrak V}}

  \def\rhom#1#2#3{{{\rm Hom}\sb{#1}(#2,#3)}}

  \def\rend#1#2{{{\rm End}\sb{#1}(#2)}}

  \def\beq{\begin{equation}} 
  \def\eeq{\end{equation}}

  \def\id{\mathrm{id}}

  \def\ot{{\otimes}}

  \def\Endd{\mbox{\rm End}\,}

 \def\ch{{\rm \xi}}



 \def\WP{\mathbb{WP}}
 \def\pr{\mathrm{pr}}

  \newcounter{zlist}

  \newcounter{blist}

  \newcounter{rlist}


\def\stac#1{\raise-.2cm\hbox{$\stackrel{\displaystyle\otimes}{\scriptscriptstyle{#1}}$}}

\def\cten#1{\raise-.2cm\hbox{$\stackrel{\displaystyle\widehat{\otimes}}
{\scriptscriptstyle{#1}}$}}

\textheight 22.5 cm
\textwidth 16cm
\topmargin -.5in \headheight 0.3in \headsep .5cm
\oddsidemargin -.0in \evensidemargin -.0in
\topskip 24pt

  \def\Label#1{\label{#1}\ifmmode\llap{[#1] }\else 
  \marginpar{\smash{\hbox{\tiny [#1]}}}\fi} 
  \def\Label{\label}

  \newtheorem{proposition}{Proposition}[section]
  \newtheorem{lemma}[proposition]{Lemma} 
   
  \newtheorem{theorem}[proposition]{Theorem} 

  \theoremstyle{definition} 
  \newtheorem{definition}[proposition]{Definition}

  \theoremstyle{remark} 
  \newtheorem{remark}[proposition]{Remark}

  \newcounter{c} 
   
  \newcommand{\etyk}[1]{\vspace{-7.4mm}$$\begin{equation}\Label{#1} 
  \addtocounter{c}{1}} 
  \renewcommand{\]}{\ifnum \value{c}=1 $$\else \end{equation}\fi} 
  \setcounter{tocdepth}{2}




\newcommand{\sign}{{\rm sign}}



\def\ot{\otimes}

\def\CC{{\mathbb C}}

\def\NN{{\mathbb N}}

\def\PP{{\mathbb P}}

\def\RR{{\mathbb R}}

\def\ZZ{{\mathbb Z}}

\newcommand{\Bb}{\mathcal{B}}
\newcommand{\Cc}{\mathcal{C}}

\newcommand{\Jj}{\mathcal{J}}

\newcommand{\Tt}{\mathcal{T}}

\def\*C{{}^*\hspace*{-1pt}{\Cc}}

\def\text#1{{\rm {\rm #1}}}



 \def\1{\mathbf{1}}

      \def\tr{\mathrm{Tr}\ }

\begin{document}

\title{Weighted circle actions on the Heegaard quantum sphere}

\author{Tomasz Brzezi\'nski}
 \address{ Department of Mathematics, Swansea University, 
  Singleton Park, \newline\indent  Swansea SA2 8PP, U.K.} 
  \email{T.Brzezinski@swansea.ac.uk}   
\author{Simon A.\ Fairfax}
\email{201102@swansea.ac.uk} 
 \subjclass[2010]{58B32; 58B34} 
 \keywords{Heegaard quantum  sphere; weighted circle action; principal comodule algebra; Fredholm module}
 
\begin{abstract}
Weighted circle actions on the quantum Heeqaard 3-sphere are considered. The fixed point algebras, termed quantum  weighted Heegaard spheres, and their representations are classified and described on algebraic and topological levels. On the algebraic side,   coordinate algebras of quantum  weighted Heegaard spheres are interpreted as generalised Weyl algebras, quantum principal circle bundles and Fredholm modules over them are constructed, and the associated line bundles are shown to be non-trivial by an explicit calculation  of their Chern numbers. On the topological side, the $C^*$-algebras of continuous functions on quantum  weighted Heegaard spheres are described and their $K$-groups are calculated. 
\end{abstract}
\maketitle

\section{Introduction}
This article may be considered as a sequel to \cite{BrzFai:tea} in so much as, on the operator algebraic level, it develops the theory of quantum teardrops which include the generic Podle\'s two-spheres \cite{Pod:sph}. On the algebraic level, however, though it employs  techniques similar to those in \cite{BrzFai:tea} it deals with integer gradings on an algebra significantly different from that of the coordiate algebra of the quantum $SU(2)$-group.

The coordinate algebra of the Heegaard quantum 3-sphere $\cO(S_{pq \theta}^3)$ \cite{CalMat:con}, \cite{BauHaj:Hee}  is defined for parametres $0 < p,q, \theta<1$, with $\theta$ irrational, as the complex $*$-algebra  generated by  $a$ and $b$ satisfying the relations,
\begin{subequations}
\begin{equation}
ab=e^{2 \pi i \theta} ba, \qquad ab^*=e^{-2 \pi i \theta}b^*a, \label{HQS rel 1} \\
\end{equation}
\begin{equation}
a^*a-paa^*=1-p, \qquad b^*b-qbb^*=1-q, \qquad (1-aa^*)(1-bb^*)=0. \label{HQS rel 2}
\end{equation}
\end{subequations}
$\cO(S^3_{pq \theta})$ contains two copies of the quantum disc with parametres $p$ and $q$ as $*$-subalgebras and can be interpreted as obtained by glueing of two quantum solid tori \cite{BauHaj:Hee}. To describe algebraic structure of  $\cO(S^3_{pq \theta})$ it is convenient to define  (self-adjoint) $A:=1-aa^*$, $B:=1-bb^*$. In terms of these elements the relations \eqref{HQS rel 2} can be recast as:
\begin{equation}\label{HeegAB}
AB=BA=0, \quad Aa=paA, \quad  Ba=aB, \quad Ab=bA, \quad Bb=qbB.
\end{equation}
The linear basis for $\cO(S^3_{pq \theta})$  consists of all $A^ka^lb^m$, $A^ka^{*l}b^m$, $B^ka^lb^m$, $B^ka^{*l}b^m$ and their $*$-conjugates, where $k,l,m \in \NN$; see \cite{BauHaj:Hee}.

Standardly, $\cO(S^3_{pq \theta})$ is considered as a $\ZZ$-graded algebra (compatible with the $*$-structure in the sense that the $*$-operation changes the grade to its negative) in two different ways. First $a$ and $b$ are given an equal grade, say, 1. The degree zero subalgebra is generated by polynomials $A$, $B$ and $ab^*$ and is known as the coordinate algebra of the {\em mirror quantum sphere} \cite{HajMat:ind}. Second, $a$ and $b$ are given opposite grades, say 1 for $a$ and $-1$ for $b$. The degree zero subalgebra, generated by $A$, $B$ and $ab$, was introduced in \cite{CalMat:cov}, where it was shown that its $C^*$-completion is isomorphic to the algebra of continuous functions on the generic (or  non-standard) Podle\'s quantum sphere \cite{Pod:sph}. 

We prefer to interpret $\ZZ$-gradings geometrically as algebraic coactions of the coordinate Hopf algebra $\cO(U(1))$  of the circle group. $\cO(U(1))$ can be identified with the $*$-algebra $\CC[u,u^*]$ of polynomials in variables $u$ and $u^*$ satisying $uu^*=u^*u=1$. The Hopf algebra structure is given by
$$
\Delta(u)=u \otimes u, \qquad \epsilon(u)=1, \qquad S(u)=u^*.
$$
The two $\ZZ$-gradings described above correspond to coactions
$
a\mapsto a \otimes u$, $b\mapsto b \otimes u$, and  $
a\mapsto a \otimes u$, $b\mapsto b \otimes u^*$, respectively.  
 It has been shown in \cite{HajMat:loc}, \cite{HajKra:pie} that these coactions are principal, i.e.\ they make $\cO(S^3_{pq \theta})$ into a principal $\cO(U(1))$-comodule algebra \cite{BrzHaj:Che}.

The paper is organised as follows. In Section~\ref{sec.def} we equip the coordinate algebra of the Heegaard 3-sphere $\cO(S^3_{pq \theta})$  with $\ZZ$-gradings determined for a pair of coprime integers $k,l$, by setting $\deg(a) =k$, $\deg(b) =l$, or, equivalently, with the weighted coaction of $\cO(U(1))$, and study the zero degree algebras $\cO(S_{pq}(k,l))$. These split into two cases, one in which both $k$ and $l$ are positive, and one in which $k$ is positive and $l$ is negative. We use the notation $\cO(S_{pq}(k,l^\pm))$ to distinguish these cases. We list bounded irreducible $*$-representations of these algebras
and identify $\cO(S_{pq}(k,l^\pm))$ as the generalised Weyl algebras. In Section~\ref{sec.princ} subalgebras $\cA_{k,l}$ of $\cO(S^3_{pq \theta})$  which admit  principal $\cO(U(1))$-coactions that fix $\cO(S_{pq}(k,l^\pm))$ are identified. In the case $k=|l| =1$ these coincide with $\cO(S^3_{pq \theta})$ and in general they can be interpreted as quantum lens spaces; see \cite{HonSzy:len}, \cite{Zie:loc} and \cite{HajRen:len} for different constructions of lens spaces. Furthermore it is shown that these principal comodule algebras are piecewise trivial but globally non-trivial, and strong connections on them are constructed. They are also given interpretation as endomorphism rings of objects in a specific category.  Section~\ref{sec.Fred} deals with noncommutative geometric aspects of $\cO(S_{pq}(k,l^\pm))$. More concretely, we construct Fredholm modules over $\cO(S_{pq}(k,l^\pm))$ and calculate Chern numbers of line bundles associated to principal comodule algebras constructed in Section~\ref{sec.princ}. Finally, in Section~\ref{sec.cont} we study algebras of continuous functions on quantum weighted Heegaard spheres,  identify them with pullbacks of Toeplitz algebras and calculate their $K$-groups.

\section{The coordinate algebras of quantum weighted Heegaard spheres} \label{sec.def}
In this section we gather algebraic properties of quantum weighted Heegaard spheres.

\subsection{The definition of quantum weighted Heegaard spheres}  A weighted circle coaction on $\cO(S^3_{p q \theta})$ consistent with the algebraic relations $(\ref{HQS rel 1})$ and $(\ref{HQS rel 2})$ is defined for $k,l$ coprime integers, by
$$
\phi_{k,l}: \cO(S^3_{p q \theta}) \rightarrow \cO(S^3_{p q \theta}) \otimes \cO(U(1)),
$$
\begin{equation}
a \mapsto a \otimes u^k, \qquad b \mapsto b \otimes u^l,
\end{equation}
and extend to the whole of $\cO(S^3_{p q \theta})$ so that $\cO(S^3_{p q \theta})$ is a right $\cO(U(1))$-comodule algebra. The fixed point subalgebras of $\cO(S^3_{p q \theta})$ with respect to coactions  $\phi_{k,l}$ are called the coordinate algebras of {\em quantum weighted Heegaard spheres}. Equipping $\cO(S^3_{p q \theta})$ with coaction $\phi_{k,l}$ is equivalent to making it into a $\ZZ$-graded algebra with grading determined by $\deg(a) =k$, $\deg(b) =l$. At this stage we need to consider the possible signs for weights $k$ and $l$. It turns out that the fixed point subalgebra splits into two cases depending on the signs of the weights $k$ and $l$. Each case can be described by firstly putting $k>0$ and $l>0$ and secondly $k>0$ and $l<0$. We write these spaces as,
$$
\cO(S_{p q}^2(k,l^{\pm})):=\cO(S_{p q \theta}^3)^{co\cO(U(1))}=\{ x \in \cO(S_{pq \theta}^3): \phi_{k,l}(x)=x \otimes 1 \},
$$
where the $+$ sign indicates positive values for $l$ and the negative sign corresponds to negative values of $l$. Before we describe these algebras in detail, we first need some tools for calculation purposes. The following lemma  can be proven by induction.

\begin{lemma} \label{rels} For all $m,n\in \NN$,
\begin{subequations}
\begin{equation} \label{powers1}
a^{*m}a^{n}=
\begin{cases}
a^{*m-n}\prod_{i=1}^{n}(1-p^iA), & \mbox{$m \geq n$} \\
\prod_{i=1}^{m}(1-p^iA)a^{n-m}, & \mbox{$m \leq n$}.
\end{cases}
\end{equation}
\begin{equation} \label{powers2}
a^ma^{*n}=
\begin{cases}
a^{m-n}\prod_{i=1}^{n}(1-p^{-i+1}A), & \mbox{$m \geq n$} \\
\prod_{i=1}^{m}(1-p^{-i+1}A)a^{*n-m}, & \mbox{$m \leq n$},
\end{cases}
\end{equation}
and similarly with $a$ replaced by $b$ (and hence $A$ by $B$) and $p$ by $q$.
\end{subequations}
\end{lemma}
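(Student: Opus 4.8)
The plan is to prove the two displayed families of identities \eqref{powers1} and \eqref{powers2} by induction, using only the defining relations \eqref{HQS rel 1}, \eqref{HQS rel 2} and their reformulation \eqref{HeegAB}; the companion statements with $a,A,p$ replaced by $b,B,q$ follow by the identical argument since $b,B,q$ satisfy formally the same relations. The two key elementary facts I would isolate first are: (i) from $a^*a - paa^* = 1-p$ and the definition $A = 1 - aa^*$ one gets $a^*a = 1 - pA$, i.e. $a^*a = 1-pA$ and $aa^* = 1-A$; and (ii) the commutation rule $Aa = paA$ from \eqref{HeegAB}, which by taking adjoints also gives $a^*A = pAa^*$, and hence $a^* (1-p^iA) = (1 - p^{i+1}A) a^*$ and $(1-p^{-i+1}A)a = a(1-p^{-i}A)$. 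These let one slide any polynomial in $A$ past a single $a$ or $a^*$ at the cost of rescaling the power of $p$ inside.

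For \eqref{powers1} I would fix $n$ and induct on $m$ (or symmetrically fix $m$ and induct on $n$, handling the two cases $m\ge n$ and $m\le n$ — which overlap at $m=n$, providing a consistency check). The base case $m=0$ (or $n=0$) is immediate. For the inductive step in the regime $m\ge n$, write $a^{*(m+1)}a^n = a^* \big(a^{*m}a^n\big) = a^* a^{*(m-n)}\prod_{i=1}^n(1-p^iA) = a^{*(m+1-n)}\prod_{i=1}^n(1-p^iA)$, which is exactly the claimed formula with $m$ replaced by $m+1$; no commutation is even needed here. The genuinely interesting case is passing from $m=n$ to $m=n-1$, i.e.\ comparing the two branches: starting from $a^{*n}a^n = \prod_{i=1}^n(1-p^iA)$ one multiplies on the left by $a$ and uses $a\prod_{i=1}^n(1-p^iA) = \prod_{i=1}^n(1-p^{i-1}A)\, a$ together with $a a^{*n}\cdots$; alternatively one increments $n$ in the lower branch $a^{*m}a^n = \prod_{i=1}^m(1-p^iA)a^{n-m}$ by right-multiplying by $a$ and checking $\prod_{i=1}^m(1-p^iA)a^{n-m}a = \prod_{i=1}^m(1-p^iA)a^{n+1-m}$. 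Identity \eqref{powers2} is handled the same way, now using $aa^* = 1-A$ and the sliding rule $a(1-p^{-i+1}A) = (1-p^{-i}A)a$ in the form needed to reindex the product; the shift $p^{-i+1}$ versus $p^{i}$ between the two identities is precisely accounted for by whether one is moving an $a$ or an $a^*$ through the product.

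The main obstacle — really the only place where care is required — is bookkeeping the exponents of $p$ inside the products when sliding $a$ or $a^*$ through $\prod_i(1-p^{\pm i+c}A)$ and making sure the telescoping of the index set $\{1,\dots,n\}$ matches the claimed formula on both sides of the $m=n$ boundary; a sign or off-by-one error in the exponent is the natural pitfall. I would therefore state the two sliding lemmas $a^{*}\prod_{i=r}^{s}(1-p^iA) = \prod_{i=r+1}^{s+1}(1-p^iA)\,a^{*}$ and $a\,\prod_{i=r}^{s}(1-p^iA) = \prod_{i=r-1}^{s-1}(1-p^iA)\,a$ explicitly and verify them by a one-line induction on $s-r$ before assembling the main induction, so that the exponent arithmetic is quarantined in a single clean sub-step.
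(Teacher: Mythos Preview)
Your approach is correct and is exactly the induction the paper has in mind (the paper states only that the lemma ``can be proven by induction'' and gives no further detail). One small slip: in your first paragraph the rule $(1-p^{-i+1}A)a = a(1-p^{-i}A)$ has the exponent shifted in the wrong direction---from $Aa = paA$ one gets $(1-p^{j}A)a = a(1-p^{j+1}A)$---but you state the sliding lemmas correctly in your final paragraph, and those are what your induction actually uses.
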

With these at hand we can describe the coordinate algebras of quantum weighted Heegaard spheres by generators and relations.

\begin{theorem}\label{thm.structure} The algebra $\cO(S^2_{pq}(k,l^+))$ is the subalgebra of $\cO(S_{pq \theta}^3)$ generated by $A=1-aa^*$, $B=1-bb^*$ and $C_+=a^l b^{*k}$ satifying the relations
\begin{equation} \label{X rel 1}
A^*=A, \quad B^*=B, \quad AB=BA=0, \quad AC_+=p^l C_+A, \quad BC_+=q^{-k} C_+B,
\end{equation}
\begin{subequations}
\begin{equation} \label{X rel 2}
C_+^*C_+=\prod_{i=1}^{l} \prod_{j=1}^{k}(1-p^iA)(1-q^{j-k}B), \qquad C_+ C_+^*=\prod_{i=1}^{l} \prod_{j=1}^{k} (1-p^{i-l}A)(1-q^{j}B).
\end{equation}
Alternatively, since $AB=0$, we can express \eqref{X rel 2} as
\begin{equation}\label{X rel 2'}
C_+^*C_+=\prod_{i=1}^{l} (1-p^iA)+\prod_{j=1}^{k}(1-q^{j-k}B)-1, \quad C_+ C_+^*=\prod_{i=1}^{l}  (1-p^{i-l}A)+\prod_{j=1}^{k}(1-q^{j}B)-1.
\end{equation}
\end{subequations}

$\cO(S^2_{pq}(k,l^-))$ is the subalgebra of $\cO(S_{pq \theta}^3)$  generated by $A=1-aa^*$, $B=1-bb^*$ and $C_-=a^{*\lvert l \rvert} b^{*k}$, satifying the relations
\begin{equation} \label{X rel 3}
A^*=A, \quad B^*=B, \quad AB=BA=0, \quad AC_-=p^l C_-A, \quad BC_-=q^{-k} C_-B,
\end{equation}
\begin{subequations}
\begin{equation} \label{X rel 4}
C_-^*C_-=\prod_{i=1}^{\rvert l \lvert} \prod_{j=1}^{k}(1-p^{i+l}A)(1-q^{j-k}B), \qquad C_- C_-^*=\prod_{i=1}^{\rvert l \lvert} \prod_{j=1}^{k} (1-p^{i}A)(1-q^{j}B).
\end{equation}
Alternatively, since $AB=0$, we can express \eqref{X rel 4} as
\begin{equation} \label{X rel 4'}
C_-^*C_-=\prod_{i=1}^{\rvert l \lvert} (1-p^{i+l}A)+ \prod_{j=1}^{k}(1-q^{j-k}B)-1, \quad C_- C_-^*=\prod_{i=1}^{\rvert l \lvert}(1-p^{i}A) +\prod_{j=1}^{k} (1-q^{j}B)-1. 
\end{equation}
\end{subequations}
\end{theorem}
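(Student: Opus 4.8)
The plan is to identify the fixed-point subalgebra of the $\ZZ$-grading $\deg(a)=k$, $\deg(b)=l$ directly from the known linear basis of $\cO(S^3_{pq\theta})$, and then translate the defining relations of $\cO(S^3_{pq\theta})$ into relations among the proposed generators using Lemma~\ref{rels}.

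\textbf{Step 1: The generators lie in the fixed-point algebra and generate it.} First I would observe that $A=1-aa^*$ and $B=1-bb^*$ have degree $0$, and that $C_+=a^lb^{*k}$ has degree $lk-kl=0$ in the $k,l^+$ case, while $C_-=a^{*|l|}b^{*k}$ has degree $k|l|-k|l|=0$ in the $k,l^-$ case (recall $l<0$ there, so $-k|l|=kl$ and the $a$-part contributes $-k|l|$... one must check the bookkeeping: $\deg(a^{*|l|})=-k|l|$, $\deg(b^{*k})=-kl=k|l|$, total $0$). So all proposed generators are fixed. For the converse I would run through the PBW-type basis listed in the introduction — monomials $A^ra^sb^t$, $A^ra^{*s}b^t$, $B^ra^sb^t$, $B^ra^{*s}b^t$ and their $*$-conjugates — impose degree $0$, and show each such basis element is a product of $A$'s, $B$'s, $C_\pm$'s and $C_\pm^*$'s (possibly after using $Aa=paA$, $Ba=aB$ etc.\ from \eqref{HeegAB} and the commutation relations \eqref{HQS rel 1} to reorder, and \eqref{powers1}--\eqref{powers2} to absorb mixed $a,a^*$ strings). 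The key point is that the relation $a^lb^{*k}$ being the "minimal" degree-zero combination of $a$ and $b^*$ follows from $k,l$ coprime: any degree-zero word in $a,a^*,b,b^*$ reduces, modulo the $A,B$ relations, to a power of $C_\pm^{\pm 1}$ times polynomials in $A,B$.

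\textbf{Step 2: Deriving the relations.} The relations in \eqref{X rel 1}/\eqref{X rel 3} are immediate: $A^*=A$, $B^*=B$ by definition; $AB=BA=0$ from \eqref{HeegAB}; and $AC_+=p^lC_+A$, $BC_+=q^{-k}C_+B$ by pushing $A$ past $a^l$ (picking up $p^l$) and noting $A$ commutes with $b^*$, and similarly $B$ commutes with $a$ and picks up $q^{-k}$ when moved past $b^{*k}$ — here one uses $Bb=qbB$ hence $Bb^*=q^{-1}b^*B$. For \eqref{X rel 2}/\eqref{X rel 4} I would compute $C_+^*C_+=b^ka^{*l}a^lb^{*k}$: apply \eqref{powers1} to $a^{*l}a^l=\prod_{i=1}^l(1-p^iA)$, then move this polynomial in $A$ through $b^k$ on the left and $b^{*k}$ on the right using $Ab=bA$ (so $A$ commutes with $b$, $b^*$), obtaining $\prod_{i=1}^l(1-p^iA)\cdot b^kb^{*k}$; then $b^kb^{*k}=\prod_{j=1}^k(1-q^{-j+1}B)$ by \eqref{powers2}, but one must re-index to match $\prod_{j=1}^k(1-q^{j-k}B)$ (substitute $j\mapsto k+1-j$). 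The factorised form \eqref{X rel 2} then follows since the two polynomials commute, and the additive form \eqref{X rel 2'} follows from $AB=0$, which forces $\prod_i(1-p^iA)\prod_j(1-q^{j-k}B)=\prod_i(1-p^iA)+\prod_j(1-q^{j-k}B)-1$ (each cross term contains $AB=0$). The computation of $C_+C_+^*$, and of $C_-^*C_-$, $C_-C_-^*$, is entirely analogous, with the exponent shifts coming from whether one uses \eqref{powers1} or \eqref{powers2} and from the sign of $l$.

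\textbf{Step 3: No further relations.} Finally I would argue that the listed relations are a complete presentation, i.e.\ the abstract algebra on generators $A,B,C_\pm$ modulo these relations has a spanning set mapping bijectively onto the basis of the fixed-point subalgebra found in Step 1 — typically one shows every element of the abstract algebra can be written as a linear combination of $C_\pm^m(\text{poly in }A)$, $C_\pm^m(\text{poly in }B)$, $C_\pm^{*m}(\cdots)$ and these are linearly independent in $\cO(S^3_{pq\theta})$ because the corresponding monomials in $a,b$ are. I expect the main obstacle to be Step 1 (and its sharpening in Step 3): carefully verifying that the coprimality of $k,l$ guarantees the degree-zero part is generated by exactly $A,B,C_\pm$ and nothing smaller, and organising the reduction of an arbitrary degree-zero basis monomial into the claimed normal form without circular reasoning. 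The rest is a careful but routine bookkeeping exercise with Lemma~\ref{rels} and the commutation relations \eqref{HQS rel 1}, \eqref{HeegAB}.
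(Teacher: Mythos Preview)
Your proposal is correct and follows essentially the same route as the paper: identify the degree-zero part from the PBW basis via the coinvariance condition $k\mu+l\nu=0$ and coprimality of $k,l$, then read off the relations from \eqref{HeegAB} and Lemma~\ref{rels}. The only minor divergence is that your Step~3 (completeness of the presentation) is not part of the theorem as stated---the paper treats that separately in a remark following the proof---so for the theorem itself Steps~1 and~2 already suffice.
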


\begin{proof}
The first stage is to identify the fixed point subalgebra with respect to the given coaction. A basis for $\cO(S_{p q \theta}^3)$ is given by $A^ka^{\#\mu}b^{\#\nu}$ for $k \geq 0,\mu, \nu \in \ZZ$ and $B^ka^{\#\mu}b^{\#\nu}$ for $k > 0,\mu, \nu \in \ZZ$, where 
$$
a^{\#\mu}:=\begin{cases}
a^\mu , & \mu\geq 0, \\
a^{* |\mu|},  & \mu<0,
\end{cases}$$
and similarly for the powers of $b$; see \cite{BauHaj:Hee}. First note that powers of $A$ and $B$ are automatically fixed by the weighted coaction $\phi_{k,l}$. 
Next,
$$
\phi_{k,l}(a^{\mu}b^{\nu})=a^{\mu} b^{\nu} \otimes u^{k \mu+l \nu} =  a^{\mu} b^{\nu} \otimes 1 \Longrightarrow k \mu+l \nu=0 \ \ \mbox{(the coinvariance condition)}.
$$
This means that basis elements of the form $a^{\mu}b^{\nu}$ are fixed under the coaction provided $k \mu=-l \nu$, which in turn means that $k \vert (-l \nu)$. On the other hand $k$ and $l$ are coprime so in fact $k \vert (-\nu)$ or $\alpha k=-\nu$ for some $\alpha \in \ZZ$. Substituting this back into the coinvariance condition gives
$
k \mu=l(\alpha k)$, i.e.\  $\mu=l \alpha$ and $\nu=-k \alpha.$
So,
$$
a^{\mu}b^{\nu}=a^{l \alpha}b^{-k \alpha}=(a^l)^{\alpha}(b^{*k})^{\alpha} \sim (a^l b^{*k})^{\alpha},
$$
concluding that $a^l b^{*k}$ is a generator from the set of coinvariant elements. This gives the full description of $\cO(S^2_{pq}(k,l^+))$ as the subalgebra of  $\cO(S^3_{p q \theta})$ generated by 
$A$, $B$ and $C_+=a^l b^{*k}$. Similarly, when $l$ is negative $\cO(S^2_{pq}(k,l^-))$ is generated by 
$A$, $B$ and $C_-=a^{*-l} b^{*k}$. 

Next we determine the relations between the generators for both algebras $\cO(S^2_{pq }(k,l^\pm))$, considering the positive case first. Equations~\eqref{HeegAB} immediately imply that $A^*=A$, $B^*=B$,  $AB=0$,  $
AC_+=p^lC_+A$, and $BC_+=q^{-k}C_+B$.
By Lemma~$\ref{rels}$,
\begin{equation*}
C_+C_+^*=(a^lb^{*k})(b^{k}a^{*l})=(a^la^{*l})(b^{*k}b^k)=\prod_{i=1}^{l}\prod_{j=1}^{k}(1-p^{i-l}A)(1-q^{j}B),
\end{equation*}
and 
\begin{equation*}
C_+^*C_+=(a^lb^{*k})^*(a^{l}b^{*k})=(a^{*l}a^{l})(b^{k}b^{*k}) =\prod_{i=1}^{l}\prod_{j=1}^{k}(1-p^iA)(1-q^{j-k}B).
\end{equation*}
The relations for the negative case are proven by similar arguments.
\end{proof}

\begin{remark}
The algebras $\cO(S^2_{pq }(k,l^\pm))$ can be understood as polynomial $*$-algebras generated by elements $A$, $B$ and $C_\pm$ that satisfy relations in Theorem~\ref{thm.structure}. By the Diamond Lemma such algebras have linear bases $A^rC_\pm^{s}$, $A^rC_\pm^{*s}$, $B^rC_\pm^{s}$, $B^rC_\pm^{*s}$. The assignments described in  Theorem~\ref{thm.structure} determine surjective $*$-algebra maps onto $\cO(S^2_{pq }(k,l^\pm))$ viewed as subalgebras of $\cO(S_{p q \theta}^3)$. These send basis elements into linearly independent elements hence are also injective and thus establish isomorpisms between $\cO(S^2_{pq }(k,l^\pm))$ and algebras generated by elements $A$, $B$ and $C_\pm$ that satisfy relations in Theorem~\ref{thm.structure}.
\end{remark}

\subsection{Representations of $\cO(S^2_{pq}(k,l^{\pm})$}
Bounded irreducible $*$-representations of coordinate algebras $\cO(S^2_{pq}(k,l^{\pm})$ are derived and classified by standard methods applicable to all algebras of this kind; see in particular the proof of \cite[Theorem~2.1]{HajMat:loc} or, for example, the proof of \cite[Proposition~2.2]{BrzFai:tea}.  
\begin{proposition}\label{prop.rep}
Up to unitary equivalence, the following is the list of all  bounded irreducible $*$-representations of $\cO(S^2_{pq}(k,l^{\pm})$. For all $m\in \NN$, let  $\cV_m \cong l^2(\NN)$ be a separable Hilbert space with orthonormal basis $e_n^m$ for $n \in \NN$. For $s=0,1,...,|l|-1$, $t=0,...,k-1$,  the representations  $\pi^{1}_s:\cO(S^2_{pq}(k,l^+)) \rightarrow \Endd(\cV_s)$, $\pi^{2}_t:\cO(S^2_{pq}(k,l^+)) \rightarrow \Endd(\cV_t)$ and $\pi^{-1}_s:\cO(S^2_{pq}(k,l^-)) \rightarrow \Endd(\cV_s)$ and $\pi^{-2}_t:\cO(S^2_{pq}(k,l^-)) \rightarrow \Endd(\cV_t)$ are given by 
\begin{subequations}
\begin{equation}
\pi_{s}^{\pm 1}(A)e_n^s=p^{n|l|+s}e_n^s, \qquad \pi_{s}^{\pm 1}(B)e_n^s=0, \qquad \pi_{s}^{1}(C_+)e_n^s=\prod_{i=1}^{l}(1-p^{i+nl+s})^{1/2} e_{n+1}^s,
\end{equation}
\begin{equation}
\pi_s^{-1}(C_-)e_n^s=\prod_{i=1}^{\lvert l \rvert}(1-p^{i+(n-1)\lvert l \rvert+s})^{1/2} e_{n-1}^s, 
\end{equation}
\begin{equation}
\pi_t^{\pm 2}(A)e_n^t=0, \quad \pi_t^{\pm 2}(B)e_n^t=q^{nk+t}e_n^t, \quad \pi_t^{\pm 2}(C_\pm)e_n^t=\prod_{j=1}^{k}(1-q^{j+(n-1)k+t})^{1/2} e_{n-1}^t.
\end{equation}
\end{subequations}
Furthermore, there are one-dimensional representations in each case given by $A,B \mapsto 0, C_{\pm} \mapsto \lambda$ where $\lambda \in \CC$ such that $\lvert \lambda \rvert=1$, which we denote by $\pi^\pm_{\lambda}$.
\end{proposition}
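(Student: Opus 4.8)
The plan is to follow the standard representation-theoretic algorithm for generalised Weyl-type algebras, exploiting the fact that the relations in Theorem~\ref{thm.structure} have exactly the shape $AB=BA=0$ together with commutation rules $AC_\pm=p^lC_\pm A$, $BC_\pm=q^{-k}C_\pm B$ and the ``sphere'' relations expressing $C_\pm^*C_\pm$ and $C_\pm C_\pm^*$ as polynomials in $A$ and $B$. First I would observe that $A$ and $B$ are commuting self-adjoint elements with $AB=0$, so in any bounded $*$-representation they are simultaneously diagonalisable and their joint spectrum lies on the union of the two coordinate axes. Irreducibility then forces us into one of two regimes: either $B$ acts as $0$ and only $A$ is nonzero, or $A$ acts as $0$ and only $B$ is nonzero (the degenerate case $A=B=0$ gives the one-dimensional representations). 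I would treat each regime separately; by the symmetry $A\leftrightarrow B$, $p\leftrightarrow q$, $C_+\mapsto C_+$ the two are essentially interchangeable, so the bulk of the work is one case.

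Concentrating on the case $B=0$, $A\neq 0$ for $\cO(S^2_{pq}(k,l^+))$: the relation $AC_+=p^lC_+A$ shows that $C_+$ shifts the $A$-eigenvalue by a factor $p^{\mp l}$, so $C_+$ and $C_+^*$ act as weighted shift operators on the eigenspaces of $A$. From $C_+^*C_+=\prod_{i=1}^l(1-p^iA)$ (using \eqref{X rel 2'} with $B=0$) and boundedness one deduces that the $A$-spectrum is contained in $\{p^{n|l|+s}\}$ for $n\in\NN$ and some ``offset'' $p^s$ with $s\in\{0,1,\dots,|l|-1\}$ — the offset arising because the shift moves eigenvalues in steps of $p^{|l|}$ and the chain must terminate (the top eigenvalue is where $C_+$ or $C_+^*$ annihilates); the constraint on $s$ comes from requiring $1-p^iA\geq 0$ and the positivity of all the weights $\prod_{i=1}^l(1-p^{i+nl+s})$. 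Irreducibility pins down the representation to be the single chain, giving $\cV_s\cong\ell^2(\NN)$ with $A$ diagonal, eigenvalues $p^{n|l|+s}$, and $C_+$ the forward weighted shift with the stated weights; one checks directly that the second sphere relation $C_+C_+^*=\prod_{i=1}^l(1-p^{i-l}A)$ is then automatically satisfied, confirming consistency. The case $A=0$, $B\neq 0$ runs the same way with the roles of $p,q,k,l$ exchanged, yielding $\pi^{\pm2}_t$, and similarly $A\neq0$, $B=0$ for $\cO(S^2_{pq}(k,l^-))$ with $C_-=a^{*|l|}b^{*k}$ acting as a backward shift, giving $\pi^{-1}_s$. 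Finally the one-dimensional representations are obtained by setting $A=B=0$, whereupon $C_\pm$ is a normal element with $C_\pm^*C_\pm=C_\pm C_\pm^*=1$, hence a unimodular scalar $\lambda$.

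The main obstacle, and the step that requires genuine care rather than routine bookkeeping, is the precise determination of which offsets $s$ (respectively $t$) actually occur and why they exhaust the list without repetition — i.e.\ showing that the decomposition of the $A$-spectrum into arithmetic-progression chains modulo steps of size $|l|$ gives exactly the $|l|$ inequivalent representations $\pi^{\pm1}_s$, $s=0,\dots,|l|-1$, and no more. This is where one must argue that distinct values of $s$ give genuinely inequivalent (not merely non-obviously-equivalent) representations, by comparing the joint spectra of $A$ and $B$, and that an arbitrary bounded irreducible representation cannot have its $A$-spectrum split between several such chains (that would contradict irreducibility via the spectral projection onto one chain being central). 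I would also need to verify boundedness forces the chain to be one-sided (bounded below and terminating above) rather than bi-infinite, which uses that $0<p,q<1$ so the weights $\prod(1-p^{\cdots})$ stay in $(0,1)$ and the eigenvalues accumulate only at $0$. Once these structural points are settled, matching up with the explicit formulas in the statement is a direct substitution, and I would simply record that the representations listed are well-defined (the weights under the square roots are non-negative), irreducible, and pairwise inequivalent, citing \cite[Theorem~2.1]{HajMat:loc} and \cite[Proposition~2.2]{BrzFai:tea} for the fact that this procedure is exhaustive.
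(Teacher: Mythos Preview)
Your proposal is correct and follows precisely the ``standard methods'' the paper invokes in lieu of a proof: the paper gives no argument for Proposition~\ref{prop.rep} beyond citing \cite[Theorem~2.1]{HajMat:loc} and \cite[Proposition~2.2]{BrzFai:tea}, and your outline is exactly the argument those references carry out, specialised to the relations of Theorem~\ref{thm.structure}. One small sharpening: the reason the offset $s$ is forced into $\{0,1,\dots,|l|-1\}$ is not merely positivity of the weights but the termination condition at the top of the chain --- boundedness of $A$ requires $C_+^*$ to annihilate a highest-weight vector $e_{\alpha_0}$, and $\|C_+^*e_{\alpha_0}\|^2=\prod_{i=1}^{l}(1-p^{i-l}\alpha_0)=0$ forces $\alpha_0\in\{p^0,p^1,\dots,p^{l-1}\}$ exactly.
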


\subsection{Quantum Heegaard spaces as generalised Weyl algebras}

Generalised Weyl algebras are defined in \cite{Bav:gweyl} as follows.

\begin{definition}
Let $\cD$ be a ring, $\sigma=(\sigma_1,...,\sigma_n)$ a set of commuting automorphisms of $\cD$ and $\tilde{a}=(\widetilde{a_1},...,\widetilde{a_n})$ a set of (non-zero) elements of the centre $Z(\cD)$ of $\cD$ such that $\sigma_i(\widetilde{a_j})=\widetilde{a_j}$ for all $i \neq j$.
The  associated {\em generalised Weyl algebra $\cD(\sigma,\tilde{a})$ of degree $n$} is a ring generated by $\cD$ and the $2n$ indeterminates $X_1^+,...,X_n^+,X_1^-,...,X_n^-$ subject to the following relations, for all $\alpha \in \cD$:
\begin{equation}
X_i^-X_i^+ =\widetilde{a_i}, \qquad X_i^+X_i^-=\sigma_i(\widetilde{a_i}), \qquad X^{\pm}_i \alpha = \sigma_i^{\pm 1}(\alpha)X_i^{\pm}, \label{GWA rel1}
\end{equation}
\begin{equation}
[X_i^-,X_j^-]=[X_i^+,X_j^+]=[X_i^+,X_j^-]=0, \quad \forall i \neq j, \label{GWA rel2}
\end{equation}
where $[x,y]=xy-yx$.
\end{definition}

We call $\tilde{a}$ the {\em defining elements} and $\sigma$ the {\em defining automorphisms} of $\cD(\sigma,\tilde{a})$. Note that in the degree one  case the relations $(\ref{GWA rel2})$ are null.

\begin{proposition}
The algebras of coordinate functions $\cO(S_{pq}^2(k,l^{\pm}))$ are degree one generalised Weyl algebras.
\end{proposition}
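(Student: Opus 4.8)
The plan is to exhibit, for each of the two families $\cO(S^2_{pq}(k,l^\pm))$, an explicit choice of base ring $\cD$, a single automorphism $\sigma$ of $\cD$, and a central element $\widetilde{a}\in Z(\cD)$ so that the presentation of the generalised Weyl algebra $\cD(\sigma,\widetilde{a})$ matches the presentation of $\cO(S^2_{pq}(k,l^\pm))$ given in Theorem~\ref{thm.structure}. The natural candidate is $\cD=\CC[A,B]/(AB)$, the commutative polynomial $*$-algebra on the two self-adjoint generators $A$, $B$ with the single relation $AB=BA=0$ imposed. This is commutative, so $\cD=Z(\cD)$, and the relations $A^*=A$, $B^*=B$, $AB=BA=0$ are exactly the portion of \eqref{X rel 1}/\eqref{X rel 3} that does not involve $C_\pm$.

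Next I would define the automorphism. Looking at \eqref{X rel 1}, the element $C_+$ satisfies $C_+A=p^{-l}AC_+$ and $C_+B=q^{k}BC_+$, so one sets $\sigma(A)=p^{-l}A$, $\sigma(B)=q^{k}B$ (and for the negative case, reading off \eqref{X rel 3}, the same formula $\sigma(A)=p^{-l}A=p^{|l|}A$, $\sigma(B)=q^{k}B$). One checks readily that this $\sigma$ is a well-defined algebra automorphism of $\cD$: it is linear, multiplicative on monomials, preserves the relation $AB=0$ since $\sigma(A)\sigma(B)=p^{-l}q^{k}AB=0$, and is invertible with inverse $A\mapsto p^{l}A$, $B\mapsto q^{-k}B$. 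Then I identify $X^-=C_+$, $X^+=C_+^*$ and take the defining element to be $\widetilde{a}=C_+^*C_+=\prod_{i=1}^{l}(1-p^iA)+\prod_{j=1}^{k}(1-q^{j-k}B)-1$ as in \eqref{X rel 2'}, which lies in $\cD$. The degree-one generalised Weyl algebra relations then demand $X^-X^+=\widetilde a$, i.e. $C_+C_+^*=\widetilde a$ — wait, the convention has $X_i^-X_i^+=\widetilde{a_i}$ and $X_i^+X_i^-=\sigma_i(\widetilde{a_i})$, so I should match $X^-X^+=C_+^*C_+$ with $\widetilde a$ by taking $X^-=C_+^*$, $X^+=C_+$; then the remaining relation $X^+X^-=C_+C_+^*$ must equal $\sigma(\widetilde a)$. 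The key verification is thus the identity
\[
\sigma\!\left(\prod_{i=1}^{l}(1-p^iA)+\prod_{j=1}^{k}(1-q^{j-k}B)-1\right)=\prod_{i=1}^{l}(1-p^{i-l}A)+\prod_{j=1}^{k}(1-q^{j}B)-1,
\]
which follows immediately by applying $\sigma$ termwise: $\sigma(1-p^iA)=1-p^{i-l}A$ and $\sigma(1-q^{j-k}B)=1-q^{j}B$. The cross-relation $X^\pm\alpha=\sigma^{\pm1}(\alpha)X^\pm$ reduces, since $\cD$ is generated by $A,B$, exactly to $C_+A=p^{-l}AC_+$ and $C_+B=q^{k}BC_+$, already recorded in \eqref{X rel 1}; and the degree-two relations \eqref{GWA rel2} are vacuous in degree one.

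This shows there is a surjective $*$-algebra map from $\cD(\sigma,\widetilde a)$ onto $\cO(S^2_{pq}(k,l^+))$. To upgrade surjectivity to an isomorphism I would compare linear bases: the generalised Weyl algebra $\cD(\sigma,\widetilde a)$ has the standard PBW-type basis consisting of $\cD$-monomials times $(X^+)^s$ or $(X^-)^s$, $s\ge 0$, which here is precisely $\{A^rC_+^s, A^rC_+^{*s}, B^rC_+^s, B^rC_+^{*s}\}$; by the Remark following Theorem~\ref{thm.structure} this is exactly the linear basis of $\cO(S^2_{pq}(k,l^+))$, and the map sends basis to basis, hence is injective. The negative case $\cO(S^2_{pq}(k,l^-))$ is handled identically, now with $X^-=C_-^*$, $X^+=C_-$, the same $\sigma$, and $\widetilde a=C_-^*C_-=\prod_{i=1}^{|l|}(1-p^{i+l}A)+\prod_{j=1}^{k}(1-q^{j-k}B)-1$ from \eqref{X rel 4'}, the verification $\sigma(\widetilde a)=C_-C_-^*$ again being termwise ($\sigma(1-p^{i+l}A)=1-p^{i}A$). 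I do not anticipate a serious obstacle; the only mild subtlety is getting the bookkeeping of the $\sigma$ versus $\sigma^{-1}$ convention and the $X^+\leftrightarrow X^-$ labelling consistent so that the two products $C_\pm C_\pm^*$ and $C_\pm^* C_\pm$ land on the correct sides of $X^\mp X^\pm=\widetilde a$ resp. $\sigma(\widetilde a)$ — once the degenerate relation $AB=0$ is correctly incorporated into $\cD$ (so that the alternative forms \eqref{X rel 2'}, \eqref{X rel 4'} of the sphere relations are available), everything else is a direct termwise check.
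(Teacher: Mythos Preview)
Your proposal is correct and follows essentially the same approach as the paper: both take $\cD=\CC[A,B]/\langle AB\rangle$, define the automorphism by rescaling $A$ and $B$, and read off $\widetilde{a}$ from the expressions for $C_\pm^*C_\pm$ or $C_\pm C_\pm^*$. The only cosmetic differences are that in the positive case the paper assigns $X^+=C_+^*$, $X^-=C_+$ with $\sigma_+(A)=p^{l}A$, $\sigma_+(B)=q^{-k}B$ (the inverse of your $\sigma$) and uses the product form \eqref{X rel 2} for $\widetilde{a}$ rather than the sum form \eqref{X rel 2'}; these choices are interchangeable, and your added basis comparison via the Remark after Theorem~\ref{thm.structure} is a welcome detail the paper leaves implicit.
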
 

\begin{proof}
Set  $\cD=\CC[A,B]/\<AB,BA\>$. In the positive case $\cO(S_{pq}^2(k,l^+))$,  $X^+=C_+^*$ and $X^-=C_+$,  the automorphism $\sigma_+$ of $\cD$  and the  defining element  $\tilde{a}$ are 
$$
\sigma_+(A)=p^{l}A, \qquad \sigma_+(B)=q^{-k}B, \qquad \tilde{a}=\prod_{i=1}^l\prod_{j=1}^{k}(1-p^{i-l}A)(1-q^{j}B).
$$

In the negative case $\cO(S_{pq}^2(k,l^-))$,  $X^+=C_-$ and $X^-=C_-^*$,  the automorphism $\sigma_-$ of $\cD$ and the  defining element  $\tilde{a}$ are 
$$
\sigma_-(A)=p^{-l}A, \qquad \sigma_-(B)=q^{k}B, \qquad \tilde{a}=\prod_{i=1}^{|l|}\prod_{j=1}^{k}(1-p^{i+l}A)(1-q^{j-k}B).
$$
That these satisfy all the axioms of a degree one generalised Weyl algebra can be checked by routine calculations. 
\end{proof}

One of the key theorems associated to generalised Weyl algebras \cite[Theorem~1.6]{Bav:gweyl} provides an insight to the global dimension of such algebras. 
For example, one can prove that the coordinate algebra $\cO(\WP_q(k,l))$ of the quantum weighted projective line defined in \cite{BrzFai:tea} is a degree one generalised Weyl algebra, and then use \cite[Theorem~1.6]{Bav:gweyl} to conclude that the global dimension of $\cO(\WP_q(k,l))$ is equal to 2 if $k=1$ and is infinite otherwise.\footnote{We are grateful to Ulrich Kr\"ahmer for pointing this out to us.} This can be used as an indication that, for the quantum teardrop case $k=1$, the classical singularity has been removed (although it is not clear yet, whether   $\cO(\WP_q(1,l))$ is a Calabi-Yau algebra). Unfortunately, the hypothesis of \cite[Theorem~1.6]{Bav:gweyl}  fails in the quantum Heegaard case since the basic ring $\cD$ contains zero divisors. On the other hand, one should not expect the global dimension of $\cO(S^2_{pq}(k,l^{\pm}))$ to be finite:  On the classical level the relation $AB=0$ implies that there is a singularity at the origin, which persists in the quantum case.

\section{Quantum weighted Heegaard spheres and quantum principal bundles}  \label{sec.princ} 
Recall from \cite{BrzHaj:Che} that, given a Hopf algebra $\cH$, a right $\cH$-comodule algebra $\cA$ (with coaction $\rho$) is said to be a {\em Hopf-Galois  extension} of its coinvariant subalgebra $\Bb$ if the canonical map
$$
\can : \cA\ot_\cB \cA\to\cA\ot \cH, \qquad a\ot a'\mapsto a\rho(a'),
$$
is bijective. If, in addition, the mutliplication map $\cB\ot \cA\to\cA$ splits as a left $\cB$-module and right $\cH$-comodule map, then $\cA$ is called a {\em principal comodule algebra} (and the coaction $\rho$ is said to be {\em principal}). 
Here we aim to construct $\cO(U(1))$-principal comodule algebras with coinvariant subalgebras $\cO(S^2_{pq}(k,l^{\pm}))$. These can be understood geometrically as coordinate algebras of principal circle bundles over the quantum weighted Heegaard sphere.
We follow the general strategy  (employed previously in \cite{BrzFai:tea} and \cite{BrzFai:rps}) of defining a cyclic group algebra coaction on $\cO(S^3_{pq \theta})$ where the space of coinvariant elements with respect to this coaction forms the total space. In Section~\ref{sec.orb} we give a categorical interpretation of this strategy.

\subsection{Circle bundles over $S^2_{pq}(k,l^{+})$}
Since the fixed point algebra of $(\cO(S_{p q \theta}^3), \phi_{k,l})$, for positive $l$, is generated by $C_+=a^lb^{*k}$, $A$ and  $B$, we need to define a comodule structure on $\cO(S_{p q \theta}^3)$ over the cyclic group algebra $\cO(\ZZ_m)$ that keeps these generators in the invariant part. In terms of  the $\ZZ_m$-grading this means
$$
\deg(a^l b^{*k})=l \ \deg(a)-k \ \deg(b) = 0 \ \text{mod} \ m.
$$
This equation is satisfied by setting $\deg(a)=k$, $\deg(b)=l$ and $m=kl$. The grading is equivalent to a coaction $\Lambda_{k,l}: \cO(S_{pq \theta}^3) \rightarrow \cO(S_{pq \theta}^3) \otimes \cO(\ZZ_{kl})$ given by $a \mapsto a \otimes u^k, b \mapsto b \otimes u^{l}$, which is extended in the usual way to make $\Lambda_{k,l}$ an algebra map and hence $(\cO(S^3_{pq \theta}), \Lambda_{k,l})$ a comodule algebra. The fixed point subalgebra is generated by $x=a^l, y=b^{k}, z=A,w=B$. This can be seen by taking a basis element and applying $\Lambda_{k,l}$:  $A^{\lambda} a^{\mu} b^{\nu}$ is coinvariant if and only if 
$k \mu+l \nu = 0 \ \text{mod} \ kl,$
hence
$\mu=l \phi$ and $\nu=k \delta$, for some $\phi, \delta \in \ZZ$. 
This means that $A^{\lambda} a^{\mu} b^{\nu} = A^{\lambda} (a^l)^{\phi} (b^k)^{\delta}$ so $a^l$, $b^k$ and $A$ are generators. By considering the other basis element the final generator $B$ is identified. The resulting algebra  $\mathcal{A}_{k,l}=\cO(S^3_{pq \theta})^{co \cO(\ZZ_{kl})}$ is the quotient of $\CC[w,x,y,z]$ by the relations
\begin{subequations}\label{arel}
\begin{equation}
xy=e^{2 \pi i \theta kl}yx, \qquad x^*y=e^{-2 \pi i \theta kl} yx^*, 
\end{equation}
\begin{equation}\label{arel.2}
xx^*=\prod_{i=1}^{l}(1-p^{i-l}z), \ \ x^*x=\prod_{i=1}^l (1-p^iz), \ \ yy^*=\prod_{i=1}^{k} (1-q^{i-k}w),  \ \ y^*y=\prod_{i=1}^k(1-q^iw),
\end{equation}
\begin{equation}
z^*=z, \ \ w^*=w, \ \ wz=zw=0, \ \ xw=wx, \ \ yz=zy, \ \ yw=q^{-k}wy, \ \ xz=p^{-l}zx.
\end{equation}
\end{subequations}

The circle group algebra coacts on $\mathcal{A}_{k,l}$ by 
$$
\rho_{k,l}: \mathcal{A}_{k,l} \rightarrow \mathcal{A}_{k,l} \otimes \cO(U(1)), \qquad w \mapsto w \otimes 1, \ x \mapsto x \otimes u, \ y \mapsto y \otimes u, \ z \mapsto z \otimes 1,
$$
making  $(\mathcal{A}_{k,l}, \rho_{k,l})$ a $\cO(U(1))$-comodule algebra. The fixed points of this comodule algebra are generated by $\alpha=w$, $\beta=z$, $\gamma=xy^*$, and thus are isomorphic to $\cO(S^2_{pq}(k,l^+))$ via
the map 
$\alpha \mapsto B, \beta \mapsto A, \gamma \mapsto C_+$.

\begin{theorem} $(\mathcal{A}_{k,l}, \rho_{k,l})$ is a principal $\cO(U(1))$-comodule algebra over $\cO(S^2_{pq}(k,l^+))$.
\end{theorem}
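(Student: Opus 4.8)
The plan is to verify the two conditions in the definition of a principal comodule algebra for the coaction $\rho_{k,l}$ of $\cH = \cO(U(1))$ on $\cA_{k,l}$, with coinvariant subalgebra $\cO(S^2_{pq}(k,l^+))$: first bijectivity of the canonical map $\can$, and second the existence of a left $\cO(S^2_{pq}(k,l^+))$-linear, right $\cH$-colinear splitting of the multiplication map (equivalently, a strong connection). Since $\cO(U(1)) = \CC[u,u^*]$ is spanned by the group-like elements $u^n$, $n\in\ZZ$, bijectivity of $\can$ reduces, by a standard argument, to showing that for every $n\in\ZZ$ there exist elements of $\cA_{k,l}$ whose product under the coaction realises $1\ot u^n$ in the image; that is, it suffices to exhibit, for each $n$, finitely many $a_i, a_i' \in \cA_{k,l}$ with $\sum_i a_i a_i' = 1$ and $\sum_i a_i \otimes a_i'{}_{[0]} \otimes a_i'{}_{[1]} = \sum_i a_i \otimes a_i' \otimes u^n$ in degree $n$. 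Concretely, because $\cA_{k,l}$ is $\ZZ$-graded by the $u$-degree with $\deg x = \deg y = 1$, $\deg z = \deg w = 0$, it is enough to produce, for $n\geq 0$, a "partition of unity" relation of the form $\sum_j \xi_j^{(n)} \eta_j^{(n)} = 1$ with $\xi_j^{(n)}$ of degree $n$ and $\eta_j^{(n)}$ of degree $-n$, and symmetrically for $n<0$.

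The key computation is therefore the construction of these degree-$(\pm 1)$ partitions of unity, from which all higher $n$ follow by taking products (or by the standard fact that it suffices to treat the generators $u$ and $u^*$ of $\cH$). In degree $1$ the natural candidates are built from $x$ and $y$ together with the spectral idempotent-like elements coming from $z=A$ and $w=B$: using the relations \eqref{arel.2}, namely $x^*x = \prod_{i=1}^l(1-p^i z)$, $y^*y=\prod_{i=1}^k(1-q^i w)$, together with $zw=wz=0$ and the fact that $A$ and $B$ are "almost invertible on complementary pieces", one expects a relation expressing $1$ as a sum of a term divisible (on appropriate sides) by $x^*x$ and a term divisible by $y^*y$ — this is precisely the algebraic shadow of the glueing of two solid tori, and mirrors the strong-connection construction in \cite{BrzFai:tea} and \cite{BrzFai:rps}. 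One would write down explicitly elements $e_+, e_- \in \CC[z,w]$ with $e_+ + e_- = 1$, $e_+$ a multiple of a power-series-free polynomial in $z$ vanishing to the right order, $e_-$ likewise in $w$, and then use $x$, $y$ and their adjoints to move these into degree $\pm 1$; the resulting finite sum gives $\can^{-1}(1\ot u)$, and its $*$-conjugate gives $\can^{-1}(1\ot u^*)$.

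Once $\can$ is shown bijective, the extension $\cO(S^2_{pq}(k,l^+)) \subseteq \cA_{k,l}$ is Hopf–Galois; principality then follows because $\cO(U(1))$ has bijective antipode and the explicit partitions of unity assemble into a strong connection $\omega : \cH \to \cA_{k,l}\ot \cA_{k,l}$ in the usual way (Brzeziński–Hajac), whose composition with multiplication splits the multiplication map as required. Alternatively one may invoke the general principle that for a coaction of a group Hopf algebra on a $\ZZ$-graded algebra, surjectivity of $\can$ onto each graded component $\cA\ot u^n$ already implies principality. I expect the main obstacle to be purely computational: exhibiting the degree-$1$ element of $\cA_{k,l}\ot\cA_{k,l}$ mapping to $1\ot u$, because the presence of the zero-divisor relation $zw=0$ (equivalently $AB=0$) means $\cD=\CC[A,B]/\langle AB\rangle$ is not a domain, so one cannot simply invert $x^*x$ or $y^*y$ but must carefully combine the two complementary "charts", keeping track of the noncommutativity parameter $e^{2\pi i\theta kl}$ between $x$ and $y$; verifying that the proposed finite sum really multiplies to $1$ and is genuinely colinear is where the real work lies, though it is entirely routine given Lemma~\ref{rels} and the relations \eqref{arel}.
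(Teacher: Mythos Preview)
Your approach is essentially the paper's: both construct a strong connection by exhibiting a degree-$1$ partition of unity and extending recursively to all $u^n$. The paper resolves your ``main obstacle'' explicitly by setting $\omega(u) = x^*\otimes x + f(z)\,y^*\otimes y$ with $f(z) = 1 - \prod_{i=1}^l(1-p^iz) = 1 - x^*x$, so that $x^*x + f(z)y^*y = x^*x + f(z) = 1$ (using $zw=0$ and that $y^*y$ has constant term $1$); note that the parameter $e^{2\pi i\theta kl}$ you worry about never actually enters, since only the self-adjoint combinations $x^*x$ and $y^*y$ appear.
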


\begin{proof}
To prove principality we construct a {\em strong connection} for $\cA_{k,l}$, that is a map
$
\omega:\cO(U(1))\longrightarrow \cA_{k,l}\otimes \cA_{k,l},
$
normalised as $\omega(1)=1\otimes 1$, and  such that
\begin{subequations}
\label{strong}
\begin{gather}
\mu\circ \omega = \eta \circ \varepsilon, \label{strong2}\\
 (\omega\otimes\id)\circ\Delta  = (\id\otimes \rho_{k,l})\circ \omega , \label{strong3}\\
(S\otimes \omega)\circ\Delta = (\tw\otimes \id)\circ (\rho_{k,l}\otimes \id)\circ \omega .  \label{strong4}
\end{gather}
\end{subequations}
Here $\mu: \cA_{k,l}\ot \cA_{k,l}\to \cA_{k,l}$ denotes the multiplication, $\eta: \CC\to \cA_{k,l}$ is the unit map,  and $\tw : \cA_{k,l}\ot \cO(U(1)) \to \cO(U(1))\ot \cA_{k,l}$ is the flip. A strong connection for $\mathcal{A}_{k,l}$ is defined by setting $\omega(1)=1 \otimes 1$ and then recursively  for $n \in \NN$,
\begin{subequations} \label{strong.+}
\begin{equation} \label{strong.+1}
\omega(u^n)=x^*\omega(u^{n-1})x+f(z)y^* \omega(u^{n-1})y,
\end{equation}
\begin{equation} \label{strong.+2}
\omega(u^{-n})=x \omega(u^{-n+1})x^*+f(p^{-l}z)y \omega(u^{-n+1})y^*,
\end{equation}
\end{subequations}
where $f(z) = 1-\prod_{i=1}^l (1-p^i z)$. That thus defined $\omega$ satisfies \eqref{strong} is proven by induction.

\underline{Condition \eqref{strong2}}: Note that, since $y^*y$ is a polynomial in $w$ with constant term 1, $zw=0$  and $f(z)$ has the zero constant term, $f(z)y^*y =f(z)$. Hence, in the case $n=1$,
$$
(\mu \circ \omega)(u)=x^*x+f(z)y^*y = x^*x + 1-\prod_{i=1}^l (1-p^i z) = 1, 
$$
by \eqref{arel.2}.
Now assume that $(\mu \circ \omega)(u^n)=1$ and consider
\begin{equation*}
\begin{aligned}
(\mu \circ \omega)(u^{n+1})&=x^*(\mu \circ \omega(u^{n}))x+f(z)y^* (\mu \circ \omega(u^{n}))y 
=x^*x+ f(z)y^*y =1,
\end{aligned}
\end{equation*}
where the second equality is the inductive assumption. The proof for negative powers of $u$ is essentially the same, so the details are omitted.

\underline{Condition \eqref{strong3}}: 
Consider $\omega$ for positive powers of $u$; putting $n=1$ gives,
\begin{equation*}
\begin{aligned}
((\id \otimes \rho_{k,l}) \circ \omega)(u)&=(\id \otimes \rho_{k,l}) (x^* \otimes x+ f(z)y^* \otimes y) \\
&=x^* \otimes x \otimes u+ f(z)y^* \otimes y \otimes u 
=\omega(u) \otimes u.
\end{aligned}
\end{equation*}
This is the basis for the inductive proof. Now assume that
$$
((\id \otimes \rho_{k,l}) \circ \omega)(u^n) = ((\omega \otimes \id) \circ \Delta)(u^n)= \omega(u^n)\ot u^n, 
$$
and consider
\begin{equation*}
\begin{aligned}
((\id \otimes \rho_{k,l}) \circ \omega)(u^{n+1})&=(\id \otimes \rho_{k,l})(x^*\omega(u^{n})x+f(z)y^* \omega(u^{n})y) \\
&=x^*\omega(u^{n})x \otimes u^{n+1}+f(z)y^*\omega(u^{n})y \otimes u^{n+1} 
=\omega(u^{n+1}) \otimes u^{n+1},
\end{aligned}
\end{equation*}
where the second equality follows from the induction hypothesis and $\rho_{k,l}$ being an algebra map. The proof for $\omega$ taking negative powers of $u$ follows a similar argument. 

\underline{Condition \eqref{strong4}}: As in the previous conditions, the proofs for positive and negative powers of $u$ are similar, hence only the positive case is displayed. Note that here we need to show that
\begin{equation}\label{asser}
((\tw \otimes \id) \circ (\rho_{k,l} \otimes id) \circ \omega )(u^n)=((S \otimes \omega ) \circ \Delta )(u^n) = u^{-n}\ot \omega(u^n),
\end{equation}
for all $n$. The case $n=1$ follows by the same argument as in the preceding proof. 
Assume that equation \eqref{asser} is true for an $n\in \NN$, 
and consider
\begin{equation*}
\begin{aligned}
((\tw &\otimes \id) \circ (\rho_{k,l} \otimes \id) \circ \omega)(u^{n+1})=((\tw \otimes \id) \circ (\rho_{k,l} \otimes \id))(x^*\omega(u^{N})x
 +f(z)y^*\omega(u^{n})y) \\
&=u^{-n-1} \otimes x^*\omega(u^{n})x+u^{-n-1} \otimes f(z)y^* \omega(u^{n})y  
=u^{-(n+1)} \otimes \omega(u^{n+1}) ,
\end{aligned}
\end{equation*}
where the second equality follows from the induction hypothesis and $\rho_{k,l}$ being an algebra map. This completes the proof of  \eqref{strong4} for positive powers of $u$.

Since $(\cA_{k,l},\rho_{k,l})$ is a comodule algebra admitting a strong connection it is principal; see \cite{DabGro:str}, \cite{BrzHaj:Che}.
\end{proof}

Recall that an $\cH$-comodule algebra $\cA$ is said to be {\em cleft} provided there is a convolution invertible $\cH$-comodule map $j: \cH\to \cA$. A cleft comodule algebra is automatically principal and corresponds to a principal bundle which only admits trivial associated vector bundles. A comodule algebra is said to be {\em trivial}, if there exists an $\cH$-comodule algebra map $j: \cH\to \cA$. A trivial comodule algebra is automatically cleft. 

\begin{proposition}
The principal $\cO(U(1))$-comodule algebra $\mathcal{A}_{k,l}$ is not cleft.
\end{proposition}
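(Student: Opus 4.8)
The plan is to show non-cleftness by the standard obstruction: a cleft comodule algebra admits a convolution-invertible $\cO(U(1))$-comodule map $j:\cO(U(1))\to\cA_{k,l}$, and such a map would force the existence of an invertible element of $\cA_{k,l}$ in the degree-$1$ part of the $\ZZ$-grading induced by $\rho_{k,l}$ (the grading with $\deg x=\deg y=1$, $\deg w=\deg z=0$). Concretely, set $e=j(u)$ and $\bar e=j(u^*)$; comodule-map-ness forces $\rho_{k,l}(e)=e\ot u$, $\rho_{k,l}(\bar e)=\bar e\ot u^{-1}$, while convolution-invertibility of $j$ together with $j$ being a coalgebra-compatible unital map gives $e\bar e=\bar e e=1$. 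So the claim reduces to: \emph{$\cA_{k,l}$ has no invertible homogeneous element of degree $1$.} (In fact it suffices to show there is no invertible element of degree $1$ at all, homogeneity being automatic from $\rho_{k,l}(e)=e\ot u$.)

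The key steps, in order: first I would read off from \eqref{arel} a linear basis of $\cA_{k,l}$ adapted to the $\ZZ$-grading — e.g. monomials $z^r x^{\#\mu} y^{\#\nu}$ and $w^r x^{\#\mu}y^{\#\nu}$ in the evident notation — and identify the degree-$1$ component $(\cA_{k,l})_1$ and degree-$(-1)$ component $(\cA_{k,l})_{-1}$ as bimodules over the degree-$0$ subalgebra $\cO(S^2_{pq}(k,l^+))$. Second, I would pass to a convenient $*$-representation to detect invertibility: applying the representations of Proposition~\ref{prop.rep} (pulled back along $\cA_{k,l}\to$ its degree-zero quotient, or more directly using the representations of $\cO(S^3_{pq\theta})$ on $\ell^2(\NN)^{\ot 2}$ restricted to $\cA_{k,l}$), one sees that the generators $x,y$ act as weighted shifts whose weights involve the operators $A,B$. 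Third — and this is where the argument bites — in any such representation the element $x^*x=\prod_{i=1}^l(1-p^i z)$ and $y^*y=\prod_{i=1}^k(1-q^i w)$ both have spectrum accumulating at values $<1$ (because $z,w$ act with spectra accumulating at $0$, so these products are bounded away from being invertible only on part of the space), and one shows that no $\cA_{k,l}$-combination of $x$ and $y$ of degree $1$ can act as an invertible operator simultaneously in all the irreducible representations; equivalently, the element would have to be invertible in the $C^*$-completion, contradicting a $K$-theoretic or index computation. Alternatively, and more cleanly, I would use the fact (established in the next section, or provable directly) that the line bundle associated to $\rho_{k,l}$ has a non-zero Chern number: a cleft extension has only trivial associated bundles, so a non-trivial associated line bundle immediately rules out cleftness.

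I expect the main obstacle to be making the "no invertible degree-$1$ element" claim rigorous without circularity. The slick route is the Chern-number argument, but at this point in the paper those computations have not yet been carried out (they appear in Section~\ref{sec.Fred}), so either one forward-references them or one gives a self-contained argument. A self-contained argument would run: suppose $e\in(\cA_{k,l})_1$ with $e\bar e=\bar e e=1$; write $e=x\cdot P + y\cdot Q + (\text{lower monomials})$ with $P,Q$ in the degree-zero part, feed this into the relations \eqref{arel}, and evaluate in the one-dimensional representations $\pi^+_\lambda$ of $\cO(S^2_{pq}(k,l^+))$ (where $A,B\mapsto 0$) extended suitably to $\cA_{k,l}$: there $x^*x\mapsto 1$, $y^*y\mapsto 1$ but the grading-one elements still cannot be unitary because the extension of $\pi^+_\lambda$ to $\cA_{k,l}$ is forced onto an infinite-dimensional module on which $x,y$ are proper isometries (non-surjective), so $e$ acts as a proper isometry and is not invertible. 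Packaging this last point — that every irreducible $*$-representation of $\cA_{k,l}$ compatible with a given character of the base is infinite-dimensional with $x$ a proper isometry — is the real content, and it is exactly the kind of weighted-shift analysis already used in Proposition~\ref{prop.rep}, so I would model the write-up on that proof.
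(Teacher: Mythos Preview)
Your reduction to ``$\cA_{k,l}$ has no invertible element of degree $1$'' is correct, and both of your suggested routes can in principle be completed. But the paper's argument is a one-liner that you have missed: $\cA_{k,l}$ is by construction a \emph{subalgebra} of $\cO(S^3_{pq\theta})$, and it is known (the paper cites \cite[Theorem~1.10]{HajRen:len}) that the only invertible elements of $\cO(S^3_{pq\theta})$ are the non-zero scalars. Hence the only units in $\cA_{k,l}$ are scalars, all of degree $0$, and no degree-$1$ unit exists. This short-circuits the entire representation-theoretic analysis you propose.

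Two remarks on your Route~B sketch, in case you want to pursue it independently. First, the inverse of $e=j(u)$ is $\bar j(u)$ for $\bar j$ the convolution inverse of $j$; there is no reason this should equal $j(u^*)$ unless $j$ is an algebra map, so your identification $\bar e=j(u^*)$ is unwarranted (though the conclusion that $e$ is invertible survives). Second, the assertion that ``$e$ acts as a proper isometry'' presupposes that $e$ is an isometry, which is not given: a cleaving map need not respect the $*$-structure. To rule out invertibility in a representation you would have to argue via something like a Fredholm index, and at that point you are essentially reproving the Chern-number obstruction of Section~\ref{sec.Fred}. Your Route~A is perfectly valid but, as you yourself note, forward-references results the paper establishes only later; the paper avoids this by leaning on the ambient algebra.
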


\begin{proof}
Since $\mathcal{A}_{k,l} \subseteq  \cO(S_{pq \theta}^3)$ and, by  \cite[Theorem~1.10]{HajRen:len}, the only invertible elements in $\cO(S_{pq \theta}^3)$ are multiples of $1$, the only invertible elements in $\mathcal{A}_{k,l}$ are also the multiples of $1$. The convolution invertible map $j: \cO(U(1)) \rightarrow \mathcal{A}_{k,l}$ must take the form $j(u)= \alpha 1$ for some $\alpha \in \CC^*$. However this violates the right $\cO(U(1))$-colinearity of $j$.
\end{proof}

\subsection{Circle bundles over $S^2_{pq}(k,l^{-})$}

Following the process as in the case of positive $l$, a suitable coaction  $\Phi_{k,l}: \cO(S_{pq \theta}^3) \rightarrow \cO(S_{pq \theta}^3) \otimes \cO(\ZZ_{k \lvert l \rvert})$ for a negative $l$  is arrived at as 
 given by $a \mapsto a \otimes u^k, b \mapsto b \otimes u^{l}$.
The fixed point subalgebra is generated by $x=a^{\lvert l \rvert}, y=b^{k}, z=A,w=B$. This coincides with the algebra $\mathcal{A}_{k,-l}$, hence positive and negative values of $l$ give rise to the same total space of the quantum principal bundle. The principal coaction of $\cO(U(1))$ on $\mathcal{A}_{k,-l}$ that fixes $\cO(S_{pq}^2(k,l^-))$ is defined by $x\mapsto x\ot u$, $y\mapsto y\ot u^*$, $z\mapsto z\ot 1$ and $w\mapsto w\ot 1$.

\subsection{Piecewise triviality}\label{sec.loc}
$\mathcal{A}_{1,1}$ is a prime example of a locally trivial quantum principal bundle \cite{BudKon:pri}, \cite{CalMat:con}, \cite{HajMat:loc},  \cite{Zie:loc} or a piecewise trivial comodule algebra \cite[Definition~3.8]{HajKra:pie}. In a similar way $\mathcal{A}_{k,|l|}$ is a piecewise trivial principal comodule algebra over $\cO(S^2_{pq}(k,l^\pm))$. Consider ideals $\langle w\rangle$, $\langle z\rangle$ of $\mathcal{A}_{k,|l|}$ generated by $w$ and $z$ respectively. Clearly $\langle w\rangle \cap \langle z\rangle = 0$, hence these ideals form a {\em complete covering} of   $\mathcal{A}_{k,|l|}$ \cite{CalMat:cov}. Each one of them is a subcomodule of $\mathcal{A}_{k,|l|}$ under the $\cO(U(1))$-coactions, hence the quotient algebras $\mathcal{A}_{k,|l|}/ \langle w\rangle$ and $\mathcal{A}_{k,|l|}/ \langle z\rangle$ are $\cO(U(1))$-comodule algebras. In view of relations \eqref{arel}, the class $[y]$ is a unitary element of $\mathcal{A}_{k,|l|}/ \langle w\rangle$, while the class $[x]$ is a unitary element of $\mathcal{A}_{k,|l|}/ \langle z\rangle$. Hence one can define right $\cO(U(1))$-colinear $*$-algebra maps 
$$
j_w: \cO(U(1)) \to \mathcal{A}_{k,|l|}/ \langle w\rangle, \quad u \mapsto [y]^{\sign(l)}; \qquad j_z: \cO(U(1)) \to \mathcal{A}_{k,|l|}/ \langle w\rangle, \quad u \mapsto [x].
$$
Consequently, $\mathcal{A}_{k,|l|}/ \langle w\rangle$ and $\mathcal{A}_{k,|l|}/ \langle z\rangle$ are trivial principal comodule algebras.

The fixed points of  $\cO(U(1))$-coactions on $\mathcal{A}_{k,|l|}/ \langle w\rangle$ and $\mathcal{A}_{k,|l|}/ \langle z\rangle$ come out as quotients
$\cO(S^2_{pq}(k,l^\pm))/\langle B\rangle$ and $\cO(S^2_{pq}(k,l^\pm))/\langle A\rangle$, respectively. These are isomorphic to coordinate algebras of quantum real projective planes $\cO(\RR\PP^2_{\sqrt{p}}(k;+))$ and $\cO(\RR\PP^2_{\sqrt{q}}(|l|;+))$, respectively, introduced in \cite{Brz:sei}. Again, $\langle A\rangle\cap \langle B\rangle=0$, so the ideals $\langle A\rangle$, $\langle B\rangle$ constitute a complete covering of $\cO(S^2_{pq}(k,l^\pm))$. Therefore, $\mathcal{A}_{k,|l|}$ are piece trivial principal comodule algebras  over $\cO(S^2_{pq}(k,l^\pm))$ in the sense of  \cite[Definition~3.8]{HajKra:pie}.

\subsection{Orbifolding}\label{sec.orb} 

The $\ZZ$-grading of the algebra $\cO(S_{p q \theta}^3)$ that leads to the definition of $\cO(S_{pq}^2(k,l^\pm))$ gives different weights to generators $a$ and $b$. There is a way of introducing new, uniform grading, which, following \cite{CalTu:cur} we term {\em orbifolding}. The algebras $\cA_{k,l}$ introduced in the previous sections gain in this way a natural, categorical interpretation. 

Consider $\cO(S_{p q \theta}^3)$ as a $\ZZ$-graded algebra with grading compatible with $*$ and  determined from $\deg(a) =k$, $\deg(b) = l$ ($k$ positive). The cyclic group $\ZZ_{k|l|}$ acts on $\cO(S_{p q \theta}^3)$ by
$$
j\la h = \zeta^{j\deg(h)}h,
$$
where $h$ is a homogeneous element of  $\cO(S_{p q \theta}^3)$  and $\zeta$ is a primitive $k|l|$-th root of unity. We can form a crossed product $\cO(S_{p q \theta}^3)\cross \ZZ_{k|l|}$ and view it as a category with objects $\chi_i$, $i=0, \ldots, k|l|$, the characters of $\ZZ_{k|l|}$, and morphism sets
$$
\rhom{} {\chi_i} {\chi_j} = \{ h\in\cO(S_{p q \theta}^3)\; |\; \deg(h) = (j-i) \mod k|l|\}.
$$
The composition is provided by the multiplication in $\cO(S_{p q \theta}^3)$. Each of the hom-sets $\rhom{} {\chi_i} {\chi_j} $ is a $\ZZ$-graded space with grading
$$
\widehat{h} := \frac{\deg(h) -j +i}{k|l|}.
$$
This grading is compatible with composition, hence each of the $\rend {} {\chi_i}$ is a $\ZZ$-graded algebra. One easily finds that $\rend {} {\chi_i}$ is a $*$-subalgebra 
of $\cO(S_{p q \theta}^3)$ generated by $A,B, a^{|l|}, b^k$, i.e.\ it is simply $\cA_{k,l}$ of the preceding sections. The hat-gradings come out as:
$$
\widehat{A} = \widehat{B} = 0, \qquad \widehat{a^{|l|}} = 1, \qquad \widehat{b^{k}} = \sign (l),
$$
i.e.\ they coincide with the coactions that make $\cA_{k,l}$ a principal comodule algebra over $\cO(S_{pq}^2(k,l^\pm))$.

\section{Fredholm modules and the Chern-Connes pairing for $\cO(S_{pq}^2(k,l^\pm))$}  \label{sec.Fred}

In this section first we associate even Fredholm modules to algebras $\cO(S_{pq\theta}^3)$ and use them to construct traces or cyclic cocycles on $\cO(S_{pq\theta}^3)$. The latter are  then used  to calculate the Chern number of a non-commutative line bundle associated to the quantum principal bundle $\mathcal{A}_{k,|l|}$ over the quantum weighted Heegaard spaces $\cO(S_{pq}^2(k,l^\pm))$.

First, recall from \cite[Chapter~IV]{Con:non} that an {\em even Fredholm module} over a $*$-algebra $\cA$ is a quadruple $(\cV,\pi, F,\gamma)$, where $\cV$ is a Hilbert space of a representation $\pi$ of $\cA$ and $F$ and $\gamma$ are operators on $\cV$ such that $F^* =F$, $F^2 =I$,  $\gamma^2 =I$, $\gamma F = -\gamma F$, and, for all $x\in \cA$, the commutator $[F,\pi(x)]$ is a compact operator. A Fredholm module is said to be {\em 1-summable} if $[F,\pi(x)]$ is a trace class operator for all $x\in \cA$. In case $\cV$ is a separable Hilbert space the last condition means
$$
\text{Tr} \ | [F,\pi(x)]|=\sum_{k \in \NN}\< ([F,\pi(x)]^* [F,\pi(x)])^{\frac{1}{2}}e_k,e_k \> < \infty, \qquad \mbox{for \ all} \ \ x \in \cA,
$$
where $\{e_k\}_{k \in \NN}$ is an orthonormal basis for $\cV$.

\subsection{Fredholm modules}
Representations for Fredholm modules are constructed from irreducible representations of $\cO(S_{pq}^2(k,l^\pm))$ listed in Proposition~\ref{prop.rep}. In the positive $l$ case we take a cue from \cite{HajMat:ind} and, for every $s=0,1,2,\ldots , l-1$, $t=0,1,2,\ldots , k-1$, consider the representation $\pi_{s,t}$ obtained as a direct sum of representations $\pi_s^1$ and $\pi_t^2$. The Hilbert space of this representation is denoted by $\cV_{s,t}$ and we  choose its orthonormal basis  $f^{s,t}_m$, $m\in \ZZ$ as follows.  For $m$ positive the $f^{s,t}_m$ correspond to the basis elements $e_m^s$ of the representation space of $\pi_s^1$, and for negative $m$, the $f^{s,t}_m$ correspond to the $e_{-m-1}^t$ of the Hilbert space of $\pi_t^2$.  In addition to $\pi_{s,t}$ we also consider the integral of one-dimensional representations,  $\pi_c  = \int_{\lambda\in S^1} \pi^+_\lambda \mathrm{d}\lambda$. The representation space of $\pi_c$ can be identified with $\cV_{s,t}$, so that
\begin{equation} \label{pic}
\pi_c(A)=\pi_c(B)=0, \ \ \pi_c(C_+^{\mu})f_m^{s,t}=f_{m+\mu}^{s,t}, \ \ \pi_c(C_+^{* \mu}) f_m^{s,t}= f_{m-\mu}^{s,t},
\end{equation}
for all $m\in \ZZ$, $\mu\in \NN$.

\begin{proposition} \label{chern.char.+}
For all $s=0,1,2,\ldots , |l|-1$, $t=0,1,2,\ldots , k-1$, 
$(\cV_{s,t}\oplus\cV_{s,t} , \bar{\pi}_{s,t}:= \pi_{s,t} \oplus \pi_c ,F, \gamma)$,  
where
$$
 F = \begin{pmatrix} 0 & I \cr I & 0\end{pmatrix}, \qquad \gamma = \begin{pmatrix} I & 0 \cr 0 & -I\end{pmatrix},
$$
is a 1-summable Fredholm module over $\cO(S_{pq}^2(k,l^+))$, while $(\cV_{s}\oplus\cV_{t} , \bar{\pi}^-_{s,t}:= \pi^{-1}_{s} \oplus  \pi^{-2}_{t} ,F, \gamma)$ is a 1-summable Fredholm module over $\cO(S_{pq}^2(k,l^-))$.

The corresponding Chern characters are
\begin{equation}\label{tau}
 \tau^\pm_{s,t} (A^{\lambda} C_\pm^{\#\mu}) =
 \begin{cases} \frac{p^{\lambda s}}{1-p^{\lambda |l|}}   & \mbox{if $\mu=0$, $\lambda \neq 0$},\\
 0  & \mbox{otherwise},
 \end{cases}
\qquad
 \tau^\pm_{s,t} (B^{\lambda} C_\pm^{\#\mu}) =
 \begin{cases} \frac{q^{\lambda t}}{1-q^{\lambda k}}   & \mbox{if $\mu=0$, $\lambda \neq 0$},\\
 0  & \mbox{otherwise}.
 \end{cases}
\end{equation}
Here 
$$
C_\pm^{\#\mu}:=\begin{cases}
C_\pm^{\mu},  & \mu\geq 0, \\
C_\pm^{*|\mu|}, & \mu<0 .
\end{cases}$$
\end{proposition}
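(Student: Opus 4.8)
The plan is to verify directly that the stated quadruples are $1$-summable Fredholm modules and then compute the Chern characters (i.e.\ the associated cyclic $0$-cocycles, given by $\tau(x) = \Trace(\gamma\,\pi(x))$ suitably regularised) on the basis elements $A^\lambda C_\pm^{\#\mu}$ and $B^\lambda C_\pm^{\#\mu}$ from the Diamond Lemma basis recorded in the Remark after Theorem~\ref{thm.structure}. I will treat the positive case in detail and indicate that the negative case is analogous.

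First I would check the algebraic axioms of an even Fredholm module. With $F$ and $\gamma$ as displayed it is immediate that $F^*=F$, $F^2=I$, $\gamma^2=I$, and $F\gamma=-\gamma F$; only the compactness of $[F,\bar\pi_{s,t}(x)]$ needs work. Writing an element of $\cV_{s,t}\oplus\cV_{s,t}$ in block form, one has
$$
[F,\bar\pi_{s,t}(x)] = \begin{pmatrix} 0 & \pi_c(x)-\pi_{s,t}(x) \cr \pi_{s,t}(x)-\pi_c(x) & 0\end{pmatrix},
$$
so it suffices to show $\pi_{s,t}(x)-\pi_c(x)$ is trace class for every generator $x\in\{A,B,C_+\}$, and then for all of $\cO(S^2_{pq}(k,l^+))$ by the derivation/ideal property of trace-class operators inside bounded operators. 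For $x=A$ and $x=B$ the operators $\pi_c(A)$, $\pi_c(B)$ vanish, while $\pi_{s,t}(A)$, $\pi_{s,t}(B)$ act diagonally with eigenvalues $p^{n|l|+s}$, $q^{nk+t}$ decaying geometrically in $n$ (from Proposition~\ref{prop.rep}); hence these differences are diagonal with summable entries, so trace class. For $x=C_+$ one compares the weighted shift $\pi_{s,t}(C_+)$ with the pure shift $\pi_c(C_+)$: both shift basis vectors $f^{s,t}_m\mapsto f^{s,t}_{m+1}$, and the difference of coefficients is $\prod_i(1-p^{i+ml+s})^{1/2}-1$ for $m\ge 0$ (and an analogous expression in $q$ for $m<0$), which tends to $0$ geometrically; so $\pi_{s,t}(C_+)-\pi_c(C_+)$ is again trace class. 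This establishes $1$-summability simultaneously.

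Next I would compute the Chern character $\tau^+_{s,t}(x) = \Trace\!\big(\gamma\,\bar\pi_{s,t}(x)\big)$, which makes sense because $\gamma\bar\pi_{s,t}(x) = \pi_{s,t}(x)\oplus(-\pi_c(x))$ and by the previous step the ``diagonal part'' differs from a traceable operator appropriately; more precisely one uses that for a $1$-summable even Fredholm module the functional $x\mapsto \frac12\Trace(\gamma F[F,\bar\pi(x)])$ is the cyclic cocycle, which here reduces to $\Trace(\pi_{s,t}(x)-\pi_c(x))$ on elements where this converges. Evaluating on the basis: if $\mu\neq 0$ then $C_\pm^{\#\mu}$ is a nonzero power of a shift, hence has zero diagonal in both $\pi_{s,t}$ and $\pi_c$, giving $\tau^+_{s,t}=0$; if $\mu=0$ and $\lambda=0$ the element is $1$ and the two traces formally cancel, giving $0$; and if $\mu=0$, $\lambda\neq 0$, then $\pi_c(A^\lambda)=0$ while $\pi_{s,t}(A^\lambda)$ is diagonal on the ``positive'' summand $\cV_s$ with eigenvalues $p^{(n|l|+s)\lambda}$, so
$$
\tau^+_{s,t}(A^\lambda) = \sum_{n\in\NN} p^{\lambda(n|l|+s)} = \frac{p^{\lambda s}}{1-p^{\lambda|l|}},
$$
and symmetrically $\tau^+_{s,t}(B^\lambda)=q^{\lambda t}/(1-q^{\lambda k})$ using the ``negative'' summand. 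For the negative case, the module $\pi^{-1}_s\oplus\pi^{-2}_t$ is already a direct sum with $C_-$ acting as lowering operators with geometrically-decaying coefficients, and one runs the same diagonal-trace computation; the relations \eqref{X rel 4} ensure the exponents line up to give the same closed forms.

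The main obstacle I anticipate is the bookkeeping around convergence and the precise normalisation of the Chern character: one must be careful that $\gamma\bar\pi_{s,t}(x)$ is not itself trace class (it is not, since $\pi_{s,t}(1)-\pi_c(1)=0$ but individual diagonal entries need not be summable for general $x$), so the cocycle must be extracted via $[F,\pi(x)]$ rather than as a naive supertrace, and then shown to agree with the stated values on the basis after regularisation. Once the identity $\tau^+_{s,t}(x)=\Trace(\pi_{s,t}(x)-\pi_c(x))$ is justified on the relevant basis elements (where the right-hand side converges absolutely by the $1$-summability estimates above), the evaluation is the routine geometric-series computation displayed. Everything for the $\pm$ cases is parallel, so I would present the $+$ case fully and remark that the $-$ case follows by the same argument with $C_+\rightsquigarrow C_-$ and the exponent shifts dictated by \eqref{X rel 3}–\eqref{X rel 4}.
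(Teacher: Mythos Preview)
Your proposal is correct and follows essentially the same route as the paper: block form of $[F,\bar\pi(x)]$, reduction to generators via the derivation property and the trace-class ideal, geometric decay of the diagonal/shift coefficients, and evaluation of $\tau^\pm_{s,t}(x)=\tr(\pi_{s,t}(x)-\pi_c(x))$ on basis elements as a geometric series. The paper makes the trace-class step for $C_+$ slightly more explicit by bounding $1-\prod_{i=1}^l(1-p^{i+s+ml})^{1/2}\leq 2^l p^{s+(m+1)l}$, and you should note the single boundary vector $f^{s,t}_{-1}$ where $\pi_{s,t}(C_+)$ vanishes rather than shifts (this does not affect summability), but otherwise the arguments coincide.
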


\begin{proof}
It is obvious that $F^* =F$, $F^2 = \gamma^2 =I$ and $F\gamma + \gamma F =0$. We first deal with the positive $l$ case. By a straightforward calculation, for all $x\in \cO(S_{pq}^2(k,l^+))$,
$$
[F,\pi(x)] = \begin{pmatrix} 0 & \pi_c(x) - \pi_{s,t}(x) \cr \pi_{s,t}(x) - \pi_c(x) & 0\end{pmatrix}.
$$
Due to the derivation property of a commutator and the fact that the set of trace class operators is a $*$-ideal in the algebra of bounded operators, suffices it to 
show that $\pi_{s,t}(x)-\pi_c(x)$ are trace class for $x=A,B,C_+$.
Using the formulae in Proposition~\ref{prop.rep} one easily finds, 
for all $\lambda, \mu \in \NN$,
\begin{subequations}
\begin{equation}
\pi_{s,t}(A^\lambda C_+^{\mu})f_m^{s,t}=
\begin{cases}
p^{\lambda((m+\mu)l+s)} \prod_{i=1}^{l \mu}(1-p^{i+s+ml})^{1/2} f^{s,t}_{m+\mu} & m\geq 0, \\
0 & m=-1,...,-\mu, \\
\delta_{\lambda, 0}\prod_{i=1}^{k\mu}(1-q^{i-(m+1+\mu)k+t})^{1/2} f^{s,t}_{m+\mu} & m < -\mu.
\end{cases}
\end{equation}
and
\begin{equation}
\pi_{s,t}(C_+^{* \mu}A^\lambda)f_m^{s,t}=
\begin{cases}
0 & m=0,1,... , \mu-1, \\
p^{\lambda(ml+s)}\prod_{i=1}^{l \mu}(1-p^{i+s+(m-\mu)l})^{1/2} f^{s,t}_{m-\mu} & m \geq \mu, \\
\delta_{\lambda, 0}\prod_{i=1}^{k\mu}(1-q^{i-(m+1)k+t})^{1/2} f^{s,t}_{m-\mu} & m < 0. \\
\end{cases}
\end{equation}
\end{subequations}
Set $X^+_{\lambda,\mu}:=\pi_{s,t}(A^\lambda C_+^{\#\mu})-\pi_c(A^\lambda C_+^{\#\mu})$. Using \eqref{pic} 
we find 
\begin{equation*}
({X^+}^*_{0,\mu}X^+_{0,\mu})^{\frac{1}{2}}f_m^{s,t}=
\begin{cases}
1-\prod_{i=1}^{l \mu}(1-p^{i+s+ml})^{1/2}f_m^{s,t} & \qquad m \geq 0, \\
0 & m=-\mu,...,-1 \\
1-\prod_{i=1}^{k \mu}(1-q^{i+t-(m+\mu)l})^{1/2}f_m^{s,t} & \qquad m< -\mu, 
\end{cases}
\end{equation*}
and, for $\lambda \neq 0$,
\begin{equation}
({X^+_{\lambda, \mu}}^* X^+_{\lambda, \mu})^{\frac{1}{2}} f_m^{s,t}=
\begin{cases}
p^{\lambda((m+\mu)l+s)} \prod_{i=1}^{l \mu}(1-p^{i+s+ml})^{1/2} f^{s,t}_{m} & m\geq \mu, \\
0 & m < \mu.
\end{cases}
\end{equation}

We need  to estimate the two infinite series for $X^+_{0,1}$ and $X^+_{1,0}$. For the first series, note that all factors in the product are less than one, hence, for a non-negative $m$
\begin{equation*} 
\begin{aligned}
\< ({X^+_{0, 1}}^* X^+_{0, 1})^{\frac{1}{2}}f^{s,t}_m,f^{s,t}_m \>& = 1-\prod_{i=1}^{l}(1-p^{i+s+ml})^{\frac{1}{2}} 
\leq 1-\prod_{i=1}^{l}(1-p^{i+s+ml}) \\ &\leq 1-(1-p^{l+s+ml})^l  = p^{l+s+ml} \sum_{i=1}^l(-1)^i\binom l i p^{(l+s+ml)i} \\
&\leq p^{l+s+ml} \sum_{i=1}^l\binom l i \leq 2^lp^{s+ml +l}.
\end{aligned}
\end{equation*}
 Hence the sum over $m\geq 0$ can be estimated by a convergent geometric series and thus it is finite. Similarly for $m<0$, using an analogous identity, we find that the sum is  finite, so $X^+_{0,1}$ is a trace-class operator.   

For the second series,
\begin{equation*}
\begin{aligned}
\sum_{m=0}^{\infty}\< ({X^+_{1, 0}}^* X^+_{1, 0})^{\frac{1}{2}}f^{s,t}_m,f^{s,t}_m \>&=\sum_{m=0}^{\infty} p^{ml+s} = \frac{p^{s}}{1-p^{ l}} . 
\end{aligned}
\end{equation*}
By defining 
$
\tilde{X}^+_{\lambda, \mu}=(\pi_{s,t}-\pi_c)(B^{\lambda} C_+^{\#\mu}),
$
in an analogous way it can be shown that $\tilde{X}^+_{1, 0}$ is  also a trace class operator; hence we have shown that $(\cV_{s,t}\oplus \cV_{s,t}, \bar{\pi}_{s,t}, F, \gamma)$ is a 1-summable Fredholm module over $\cO(S_{pq}^2(k,l^+))$.

For the negative case, set $X^-_{\lambda,\mu}:=\pi^{-1}_{s}(A^\lambda C_-^{\#\mu})-\pi^{-2}_ t(A^\lambda C_-^{\#\mu})$, $\tilde{X}^-_{\lambda,\mu}:=\pi^{-1}_{s}(B^\lambda C_-^{\#\mu})-\pi^{-2}_ t(B^\lambda C_-^{\#\mu})$, the only non-zero entries of the commutator of $F$ with $\bar{\pi}^-_{s,t}$ evaluated at the basis elements of $\cO(S_{pq}^2(k,l^-))$. Using the formulae in Proposition~\ref{prop.rep} one finds that $\tr |X^-_{0,1}|$ can be estimated by the difference of two geometric series, while both $\tr |X^-_{1,0}|$ and  $\tr |\tilde{X}^-_{1,0}|$ are geometric series, very much as in the positive $l$ case.  Thus $(\cV_{s}\oplus\cV_{t} , \bar{\pi}^-_{s,t}:= ,F, \gamma)$ is a 1-summable Fredholm module over $\cO(S_{pq}^2(k,l^-))$ as required.

 Finally, we can calculate the Chern characters on the basis elements of $\cO(S_{pq}^2(k,l^\pm))$ using $\tau^\pm_{s,t}(x) = \tr (\gamma \bar{\pi}_{s,t}^\pm(x))$. First, for $\lambda\neq 0$,
\begin{equation*}
\tau^\pm_{s,t}(A^{\lambda}C_\pm^{\#\mu})=\tr( \gamma \bar{\pi}_{s,t}^\pm(A^{\lambda}C_\pm^{\#\mu})) =\tr(X^\pm_{\lambda, \mu}) = \delta_{\mu,0}\sum_{m=0}^{\infty} p^{\lambda(ml+s)}=\delta_{\mu,0}\frac{p^{\lambda s}}{1-p^{\lambda |l|}},
\end{equation*}
where we noted that  if  $\mu \neq 0$, then  all diagonal entries of $X^\pm_{\lambda, \mu}$ are zero. 
Similarly, by considering the traces of $\tilde{X}^\pm_{\lambda,\mu}$ we find that
$\tau^\pm_{s,t}(B^{\lambda}C_+^{\#\mu})= \delta_{\mu,0}
\frac{q^{\lambda t }}{1-q^{\lambda k}},$   
for $\lambda \neq 0$. 
\end{proof}

\subsection{The Chern-Connes pairing}
Projective modules of sections of  line bundles associated to a principal $\cO(U(1))$-comodule algebra $\cA$ with coaction $\rho$  and coinvariant subalgebra $\cB$ are defined as
$$
\cL[n] := \{ x\in \mathcal{A} : \rho(x) = x\ot u^n\}, \qquad n\in \ZZ.
$$
In other words, $\cL[n]$ is the degree $n$ component of $\mathcal{A}$ when the latter is viewed as a $\ZZ$-graded algebra. An idempotent $E[n]$ for $\cL[n]$ is given in terms of a strong connection $\omega$,
 \begin{equation}\label{projectors}
E[n]_{ij} = \omega(u^n)\su 2_i\omega(u^n)\su 1_j \in \cB,
\end{equation}
where $\omega(u^n) = \sum_i  \omega(u^n)\su 1 _i \ot \omega(u^n)\su 2_i$; see \cite[Theorem~3.1]{BrzHaj:Che}. 
The traces of powers of each of the $E[n]$ make up a cycle in the cyclic complex of $\cB$. In particular $\tr E[n]$ can be paired with the Chern character associated to a Fredholm module over $\cB$ to give an integer, which identifies isomorphism classes of the $E[n]$.  

\begin{theorem}\label{prop.pair}
For all $s=0,1,\ldots, l-1$, $t=0,1,\ldots,k-1$, let $\tau^+_{s,t}$ be the cyclic cocycle on $\cO(S_{pq}^2(k,l^+))$ constructed in Proposition~\ref{chern.char.+}. Let $E[n]$ be the idempotent determined by  $\omega(u^n)$ in \eqref{strong.+}. Then  $\tau^+_{s,t}(\tr E[n]) =-n$. Consequently, for $n\neq 0$, the left $\cO(S_{pq}^2(k,l^+))$-modules $\cL[n]$ corresponding to $E[n]$ are not free.
\end{theorem}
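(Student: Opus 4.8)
The plan is to compute the pairing $\tau^+_{s,t}(\operatorname{tr} E[n])$ directly by evaluating the trace of the idempotent $E[n]$ in the cyclic cocycle $\tau^+_{s,t}$. First I would use the recursive formula \eqref{strong.+} to get an explicit expression for $\omega(u^n)$, or rather for the quantity that enters the idempotent, namely $\operatorname{tr} E[n] = \sum_i \omega(u^n)\su{2}_i \omega(u^n)\su{1}_i \in \cB$. Since $\tau^+_{s,t}$ vanishes on products $A^\lambda C_+^{\#\mu}$ and $B^\lambda C_+^{\#\mu}$ with $\mu \neq 0$ and with $\lambda = 0$, only the terms of $\operatorname{tr} E[n]$ that are pure powers $A^\lambda$ or $B^\lambda$ with $\lambda \neq 0$ contribute; so the key is to extract from $\omega(u^n)$ the coefficient of such pure powers. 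The recursion $\omega(u^{n}) = x^*\omega(u^{n-1})x + f(z)y^*\omega(u^{n-1})y$ together with $\operatorname{tr} E[n]$ flipping the two legs will produce, after unwinding, a sum that telescopes; using the relations \eqref{arel.2} (so $x^*x = \prod_i(1-p^i z)$ etc.) and the fact that $zw=0$, the $z$-part and the $w$-part decouple.

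The cleanest route is probably an induction on $n$. For $n=1$, $\operatorname{tr} E[1] = xx^* + \text{(terms in }f(z), y, y^*\text{)}$, which one rewrites using \eqref{arel.2}; applying $\tau^+_{s,t}$ and the explicit formulas \eqref{tau} should give $-1$. Then I would establish a recursion $\operatorname{tr} E[n] - \operatorname{tr} E[n-1] = (\text{something whose }\tau^+_{s,t}\text{-value is} -1)$, or equivalently show $\tau^+_{s,t}(\operatorname{tr} E[n]) = \tau^+_{s,t}(\operatorname{tr} E[n-1]) - 1$. The extra term appearing when passing from $E[n-1]$ to $E[n]$ will be a sum of commutator-type expressions (coming from moving $x, x^*$ past the $\cB$-valued entries, picking up factors $p^{\pm l}$ via $xz = p^{-l}zx$) plus one genuinely new pure-power term; the commutator terms should either vanish under the trace $\tau^+_{s,t}$ (it is a trace, hence kills commutators) or collapse. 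The geometric-series identity $\sum_{m\ge 0} p^{\lambda(ml+s)} = p^{\lambda s}/(1-p^{\lambda |l|})$ implicitly encoded in \eqref{tau} is what converts the bookkeeping of $p$-powers into the clean answer.

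Once $\tau^+_{s,t}(\operatorname{tr} E[n]) = -n$ is established, the final sentence is immediate: the Chern--Connes pairing of a cyclic cocycle with the class of a finitely generated projective module is an isomorphism invariant taking the same value on isomorphic modules, and it is additive/normalised so that free modules pair to integer multiples of $\tau^+_{s,t}(1)$; more simply, $\tau^+_{s,t}$ applied to the idempotent $I$ of a free module of rank $r$ gives $0$ (since $\tau^+_{s,t}(1) = 0$ by \eqref{tau} with $\mu = 0,\lambda = 0$), whereas $\tau^+_{s,t}(\operatorname{tr} E[n]) = -n \neq 0$ for $n \neq 0$. Hence $\cL[n]$ cannot be free.

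The main obstacle I anticipate is the combinatorial control of $\omega(u^n)$: the recursion \eqref{strong.+} produces, after $n$ steps, a sum of $2^n$ words in $x, x^*, y, y^*$ and the polynomial $f(z)$, and one must argue carefully which of the corresponding terms in $\operatorname{tr} E[n]$ survive pairing with $\tau^+_{s,t}$ and that the surviving part assembles into exactly $-n$. Organising this — ideally via a clean closed form for the "diagonal pure-power part" of $\omega(u^n)$, or via the telescoping/trace-vanishing argument sketched above — is the real work; everything after that (the geometric series, the non-freeness conclusion) is routine. A secondary subtlety is making sure the $z$-channel and $w$-channel contributions are bookkept separately (they cannot mix because $zw = 0$), and that the normalisation $\omega(1) = 1\otimes 1$, i.e.\ $\operatorname{tr} E[0] = 1$ with $\tau^+_{s,t}(1) = 0$, is consistent with the base of the induction.
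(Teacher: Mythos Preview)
Your plan matches the paper's approach: induction on $n$ via a recursion for $\operatorname{tr} E[n]$, with the base case $n=1$ computed directly from \eqref{arel.2}. Two sharpenings will turn your sketch into the paper's argument. First, the $w$-channel does not merely decouple --- it disappears entirely: because $yy^*$ is a polynomial in $w$ with constant term $1$, $zw=0$, and $f$ has zero constant term, one gets $yg_n(z)f(z)y^* = g_n(z)f(z)$, so $\operatorname{tr} E[n] = g_n(z)$ is a polynomial in $z$ alone satisfying $g_{n+1}(z) = (1-f(p^{-l}z))g_n(p^{-l}z) + f(z)g_n(z)$ (this uses a preliminary rewriting of $\omega(u^n)$ with the new $x^*,x$ and $f(z)y^*,y$ inserted at the \emph{outer} ends rather than the inner ones). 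Second, the mechanism that closes the induction is not a generic ``commutators vanish under the trace'' argument but the explicit identity $f(p^{s-l}) = 1$ (one factor in $\prod_{i=1}^l(1-p^{i+s-l})$ vanishes for each $s=0,\ldots,l-1$); applying $\tau^+_{s,t}$ to the recursion and using this identity collapses the extra terms to exactly $-1$. Your final paragraph on non-freeness is correct as stated.
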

\begin{proof} 
We prove the theorem for the positive values of $n$. The negative $n$ case is  proven in a similar way. Define $f (z) = 1 - \prod_{i=1}^l (1-p^iz)$ and note that, for all $s=0,1,\ldots, l-1$, $f(p^{s-l}) =1$. 

\begin{lemma}\label{lem.pair.1}
For positive $n$,
\begin{equation}\label{strong.+3}
\omega(u^n) = \sum_i \omega(u^{n-1})\su 1_ix^* \ot  x\omega(u^{n-1})\su 2_i + \sum_i \omega(u^{n-1})\su 1_i f(z) y^*\ot  y\omega(u^{n-1})\su 2_i
\end{equation}
\end{lemma}
\begin{proof}
This is proven by induction. For $n=1$, this is simply equation \eqref{strong.+1} with $n=1$. Assume that equation \eqref{strong.+3} is true for any $r\leq n$. Then, using \eqref{strong.+1} and the inductive assumption, we can compute
\begin{eqnarray*}
\omega(u^{n+1}) &=& x^*\omega(u^n)x +f(z) y^*\omega(u^n)y \\
&=& \sum_i x^*\omega(u^{n-1})\su 1_i\underbrace{x^* \ot  x}\omega(u^{n-1})\su 2_ix\\
&& + \sum_i x^*\omega(u^{n-1})\su 1_i \underline{f(z) y^*\ot  y}\omega(u^{n-1})\su 2_ix\\
&& +\sum_i f(z)y^*\omega(u^{n-1})\su 1_i\underbrace{x^* \ot  x}\omega(u^{n-1})\su 2_iy\\
&& + \sum_i f(z)y^*\omega(u^{n-1})\su 1_i \underline{f(z) y^*\ot  y}\omega(u^{n-1})\su 2_iy\\
&=&\sum_i \omega(u^{n})\su 1_ix^* \ot  x\omega(u^{n})\su 2_i + \sum_i \omega(u^{n})\su 1_i f(z) y^*\ot  y\omega(u^{n})\su 2_i,
\end{eqnarray*}
where we indicated grouping of terms over which the definition \eqref{strong.+1}  of the strong connection $\omega$ is applied. 
\end{proof} 
\begin{lemma}\label{lem.pair.2}
For all positive $n$, $\tr E[n] = g_n(z)$, where $g_n(z)$ is a polynomial in $z$ independent of $w$ such that
\begin{equation}\label{ind.g}
g_{n+1}(z) = \left(1-f\left(p^{-l}z\right)\right) g_n\left(p^{-l}z\right) + f(z)g_n(z).
\end{equation}
\end{lemma}
\begin{proof} By \eqref{strong.+3} and the definition of the idempotents $E[n]$,
\begin{eqnarray*}
\tr E[n] &=& \sum_i x\omega(u^{n-1})\su 2_i\omega(u^{n-1})\su 1_ix^*  + \sum_i   y\omega(u^{n-1})\su 2_i\omega(u^{n-1})\su 1_i f(z) y^*\\
&=& x\tr E[n-1]x^*  +   y\tr E[n-1]f(z) y^*.
\end{eqnarray*}
In particular, since $\tr E[0] =1$, 
\begin{equation}\label{g1}
\tr E[1] =xx^*  +   yf(z) y^* = xx^*  +    y y^*f(z) = 1-f\left(p^{-l}z\right) + f(z),
\end{equation}
where we used \eqref{arel} (expressed in terms of the polynomial $f$), in particular the fact that $z$ commutes with $y$, that $yy^*$ is a polynomial  in $w$ with constant term 1,  $wz=0$ and $f$ has the zero constant term. Therefore, $\tr E[1]$ is a polynomial in $z$ only (not in $w$), and it satisfies \eqref{ind.g} with $g_0 =1$. 

Assume, inductively, that $\tr E[n] = g_n(z)$. Then, again extracting the same information from \eqref{arel} as before and, additionally, using the commutation rule between $z$ and $x^*$ we obtain,
\begin{eqnarray*}
g_{n+1}(z) &=& xg_n(z)x^* + yg_n(z)y^*f(z) = xx^*g_n\left(p^{-l}z\right) + yg_n(z)y^*f(z)\\
& =&\left(1-f\left(p^{-l}z\right)\right) g_n\left(p^{-l}z\right) + f(z)g_n(z),
\end{eqnarray*}
as required.
\end{proof}

With Lemma~\ref{lem.pair.1} and Lemma~\ref{lem.pair.2} at hand we can complete the proof of Theorem~\ref{prop.pair}. Since $z=A$, the Chern number 
$$
\ch_n := \tau^+_{s,t}(\tr E[n]) = \tau^+_{s,t}(g_n(z)),
$$
is obtained by evaluating the powers of $z$ in polynomial expansion of $g_n(z)$ using formulae \eqref{tau}. We will proceed by induction on $n$, but first write
$$
f(z) = \sum_{m=1}^l c^l_m z^m, \qquad  g_n(z) = \sum_{r=0}^N d^n_r z^r.
$$
Note that $f(0) =0$, hence in view of \eqref{g1}, $g_0(0) =1$, and, consequently $g_n(0)=1$, by \eqref{ind.g}. Therefore, $d^n_0=1$. 

Apply \eqref{g1} and \eqref{tau}  to calculate
\begin{eqnarray*}
\ch_1 &=& \tau^+_{s,t}(g_1(z)) =\tau^+_{s,t}\left( 1-f\left(p^{-l}z\right) + f(z)\right)\\
&=& \tau^+_{s,t}\left(1 -\sum_{m=1}^l c^l_m(p^{-lm}-1)z^m\right)
= -\sum_{m=1}^l c^l_mp^{-lm+sm} 
= -f(p^{s-l})  = -1.
\end{eqnarray*}
The last equality follows from the observation that since $s=0,1,\ldots ,l-1$, one of the factors in the product must vanish. Next, assume that $\ch_n =-n$, that is
\begin{equation}\label{ind.ch}
 \sum_{r=1}^N d^n_r \frac{p^{sr}}{1-p^{lr}}  = -n.
\end{equation}
 Then, using \eqref{ind.g}
\begin{eqnarray*}
\ch_{n+1} &=& \sum_{r=0}^N d^n_r p^{-lr} \tau^+_{s,t}(z^{r})
 - \sum_{m=1}^l \sum_{r=0}^N c^l_md^n_r p^{-l(m+r)} \tau^+_{s,t}(z^{m+r}) + \sum_{m=1}^l \sum_{r=0}^N c^l_md^n_r  \tau^+_{s,t}(z^{m+r})\\
&=& \sum_{r=1}^N d^n_r \frac{p^{(s-l)r}}{1-p^{lr}} 
 - \sum_{m=1}^l \sum_{r=0}^N c^l_md^n_r \frac{p^{-l(m+r)}-1}{1-p^{l(m+r)}}p^{{(m+r)}s} \\
 &=& \sum_{r=1}^N d^n_r \frac{p^{(s-l)r}}{1-p^{lr}} 
 -f(p^{s-l})\sum_{r=0}^N d^n_r p^{(r-l)s} \\
 &=& \sum_{r=1}^N d^n_r \left(\frac{p^{(s-l)r}}{1-p^{lr}} -p^{(r-l)s} \right) -1 = \sum_{r=1}^N d^n_r \frac{p^{sr}}{1-p^{lr}} -1  = -n-1,
 \end{eqnarray*}
by inductive assumption \eqref{ind.ch}. This completes the proof of the theorem.
\end{proof}

\section{Continuous functions on the quantum weighted
Heegaard spheres}  \label{sec.cont}

The $C^*$-algebras $C(S^2_{pq}(k,l^\pm))$ of continuous functions on the quantum weighted
Heegaard spheres are defined as completions of the direct sum of representations  classified in Proposition~\ref{prop.rep}. 
In this section we identify these algebras as pullbacks of the Toeplitz algebra and  calculate their K-groups, closely following the approach of \cite{She:Poi} and\cite{HajMat:rea} (see also \cite{Brz:sei}).

We think about the Toeplitz algebra $\cT$ concretely as the $C^*$-algebra generated by an unilateral shift $U$ acting on a separable Hilbert space $\cV$ with orthonormal basis $e_n$ by  $Ue_n = e_{n+1}$ ($\cV$ could be any of the spaces $\cV_s$ in Proposition~\ref{prop.rep}). $\cT$ can be thought of as the algebra of continuous functions on the quantum unit disc, and the restriction of these functions to the boundary circle $S^1$ yields the {\em symbol map},
$$
\sigma : \cT \to C(S^1), \qquad U\mapsto u,
$$
where $u$ is the unitary generator of $C(S^1)$.  Let $\iota$ be an automorphism of $C(S^1)$ given by $u\mapsto u^*$. For all  $k\in \NN$ and $l\in \ZZ$, define
$$
\cT^{k,l} = 
\{ (x_1, \ldots, x_{k+l})\in \cT^{\oplus k+l}\; |\; \sigma(x_1) = \ldots = \sigma(x_l) =\iota\circ \sigma(x_{l+1}) = \ldots =  \iota\circ \sigma(x_{k+l}) \}
$$
for positive $l$ and 
$$
\cT^{k,l} = 
\{(x_1, \ldots, x_{k-l})\in \cT^{\oplus k-l}\; |\; \sigma(x_1) = \ldots = \sigma(x_{k-l}) \}, 
$$
for negative $l$. In this section we prove
\begin{theorem} 
For all $k\in \NN$, $l\in \ZZ$,
\begin{equation}\label{s.expl}
C(S^2_{pq}(k,l^\pm)) \cong \cT^{k,l}. 
\end{equation}
Consequently,
\begin{equation}\label{s.k}
K^1(C(S^2_{pq}(k,l^\pm))) =0, \qquad  K^0(C(S^2_{pq}(k,l^\pm))) =\ZZ^{k+ |l|}.
\end{equation}
\end{theorem}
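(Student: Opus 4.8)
\emph{Overview.} The plan is to prove \eqref{s.expl} by exhibiting a concrete $*$-isomorphism and then to deduce \eqref{s.k} from the resulting extension by $C(S^1)$ via the six-term sequence. I describe the case $l>0$; the case $l<0$ is identical, the sole difference being that all the symbol maps appearing below coincide, so that no $\iota$-twist is needed --- which matches the definition of $\cT^{k,l}$ for negative $l$ ($k-l=k+|l|$ copies of $\cT$ with a single common symbol).

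\emph{Step 1: each infinite-dimensional irreducible completes to a Toeplitz algebra.} In the representation $\pi^1_s$ of Proposition~\ref{prop.rep} the operator $\pi^1_s(C_+)$ is a unilateral weighted shift with weights $w_n=\prod_{i=1}^{l}(1-p^{i+nl+s})^{1/2}\in(0,1)$, $w_n\to 1$; writing it as $UD$ with $D=\mathrm{diag}(w_n)$ positive and invertible, the generated $C^*$-algebra contains $D^2=(UD)^*(UD)$, hence $D$, hence $U$, so it equals the Toeplitz algebra $\cT=C^*(U)\supseteq\cK$. Since $\pi^1_s(B)=0$ and $\pi^1_s(A)$ is recovered from $\pi^1_s(C_+^*C_+)=\prod_i(1-p^i\pi^1_s(A))$ by functional calculus (the polynomial $x\mapsto\prod_i(1-p^ix)$ is injective on the spectrum of $\pi^1_s(A)$), $\pi^1_s$ extends to a surjection $\varphi^1_s\colon C(S^2_{pq}(k,l^+))\twoheadrightarrow\cT$. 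The same argument --- with backward weighted shifts annihilating $e_0$, the boundary weight being $0$ because one factor is $1-q^{0}=0$ --- gives surjections $\varphi^2_t\colon C(S^2_{pq}(k,l^+))\twoheadrightarrow\cT$. Under the symbol map $\sigma\colon\cT\to C(S^1)$ one gets $\sigma\varphi^1_s(C_+)=u$, $\sigma\varphi^2_t(C_+)=u^*$ and $\sigma\varphi^1_s(A)=\sigma\varphi^1_s(B)=\sigma\varphi^2_t(A)=\sigma\varphi^2_t(B)=0$, so $\sigma\varphi^1_s=\iota\circ\sigma\varphi^2_t$ (they agree on the generators $A,B,C_+$). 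Hence $\Psi:=\big(\bigoplus_s\varphi^1_s\big)\oplus\big(\bigoplus_t\varphi^2_t\big)$ maps $C(S^2_{pq}(k,l^+))$ into the pullback $\cT^{k,l}\subseteq\cT^{\oplus(k+l)}$.

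\emph{Step 2: $\Psi$ is an isomorphism onto $\cT^{k,l}$.} Each one-dimensional representation $\pi^+_\lambda$ is $\mathrm{ev}_\lambda\circ\sigma\circ\varphi^1_0$, so $\Psi$ dominates every representation listed in Proposition~\ref{prop.rep} and is therefore isometric, hence injective. For surjectivity I would compare two extensions by $C(S^1)$: on the target, the common symbol gives $0\to\cK^{\oplus(k+l)}\to\cT^{k,l}\xrightarrow{\sigma}C(S^1)\to0$; on the source, setting $A=B=0$ makes $C_+$ unitary, so $C(S^2_{pq}(k,l^+))/\overline{\langle A,B\rangle}\cong C(S^1)$, and $\Psi$ intertwines these with the identity on $C(S^1)$. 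Since the diagonal compacts $\pi^1_s(A)$ have pairwise distinct spectra (and likewise the $\pi^2_t(B)$), a Chinese-remainder argument shows $\Psi$ carries $\overline{\langle A\rangle}$ onto $\cK^{\oplus l}\oplus0^{\oplus k}$ and $\overline{\langle B\rangle}$ onto $0^{\oplus l}\oplus\cK^{\oplus k}$, hence $\overline{\langle A,B\rangle}$ onto $\cK^{\oplus(k+l)}$; the five lemma then forces $\Psi$ onto $\cT^{k,l}$. This proves \eqref{s.expl}.

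\emph{Step 3: $K$-theory, and the main obstacle.} Feeding $0\to\cK^{\oplus(k+|l|)}\to\cT^{k,l}\to C(S^1)\to0$ into the six-term sequence, with $K_0(\cK^{\oplus(k+|l|)})=\ZZ^{k+|l|}$, $K_1=0$, $K_0(C(S^1))=K_1(C(S^1))=\ZZ$, leaves only the index map $\partial\colon\ZZ=K_1(C(S^1))\to\ZZ^{k+|l|}$ to identify: the generator $[u]$ lifts inside $\cT^{k,l}$ to the tuple of shifts $U$ and co-shifts $U^*$ dictated by the symbol pattern, whose componentwise Fredholm index is a vector with all entries $\pm1$ --- a primitive vector --- so $\partial$ is split injective with cokernel $\ZZ^{k+|l|-1}$. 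The sequence then collapses to $0\to K_1(\cT^{k,l})\to\ZZ\xrightarrow{\partial}\ZZ^{k+|l|}\to K_0(\cT^{k,l})\to\ZZ\to0$, giving $K_1(\cT^{k,l})=\ker\partial=0$ and a split extension $0\to\ZZ^{k+|l|-1}\to K_0(\cT^{k,l})\to\ZZ\to0$, i.e.\ $K_0(\cT^{k,l})\cong\ZZ^{k+|l|}$; together with \eqref{s.expl} this is \eqref{s.k}. I expect the main obstacle to be Step 2: identifying the two extensions correctly and, above all, proving that $A$ and $B$ together generate the whole compact ideal $\cK^{\oplus(k+|l|)}$ of the pullback (equivalently, that $\Psi$ is onto), along with keeping straight the symbol bookkeeping and the $\iota$-twist that separates the $l>0$ and $l<0$ cases; Steps 1 and 3 are then routine.
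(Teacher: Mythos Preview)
Your proposal is correct and follows essentially the same approach as the paper: both arguments assemble the infinite-dimensional irreducibles into a $*$-homomorphism into $\cT^{\oplus(k+|l|)}$, check via the symbols that the image lands in $\cT^{k,l}$, obtain injectivity from the fact that the one-dimensional representations factor through, and obtain surjectivity by comparing the two extensions of $C(S^1)$ by $\cK^{\oplus(k+|l|)}$ (five lemma), after identifying the closed ideal generated by $A$ and $B$ with the compact summands. Your ``Chinese-remainder'' observation (pairwise disjoint spectra of the $\pi^1_s(A)$ and of the $\pi^2_t(B)$) makes explicit a point the paper states more tersely, and your Step~3 spells out the index map in the six-term sequence in slightly more detail than the paper does, but the structure is the same.
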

\begin{proof}
To see that \eqref{s.k} follows from \eqref{s.expl}, we simply observe that the standard exact sequence
\begin{equation} \label{seq.123a}
\xymatrix{0 \ar[r] & \mathcal{K} \ar[r] & \mathcal{T} \ar[r]^-{\sigma } & C(S^1) \ar[r] & 0,}
\end{equation}
that characterises the Toeplitz algebra in terms of compact operators on $\cV$, yields the exact sequence
\begin{equation} \label{seq.123.pull}
\xymatrix{0 \ar[r] & \mathcal{K}^{\oplus k+|l|} \ar[r] & \mathcal{T}^{k,l} \ar[r] & C(S^1) \ar[r] & 0.}
\end{equation}
The sequence \eqref{seq.123.pull} gives rise to a six-term exact sequence of $K$-groups, which can be studied precisely as in \cite[Section~4.2]{Brz:sei} to aid the derivation of the $K$-groups as stated.

Let $J_A^\pm$, $J_B^\pm$ denote the closed $*$-ideals of $C(S^2_{pq}(k,l^\pm))$ obtained by  completing of ideals $\Jj_A^\pm$, $\Jj_B^\pm$ of $\cO(S^2_{pq}(k,l^\pm))$  generated by $A$ and $B$,
and let $\psi: C(S^2_{pq}(k,l^\pm))\to C(S^2_{pq}(k,l^\pm))/(J_A^\pm\oplus J_B^\pm)$ be the canonical surjection. The image of $\psi$ is generated by $\psi(C_\pm)$. In view of the relations \eqref{X rel 2} and \eqref{X rel 4}, $\psi(C_\pm)$ is a unitary operator, hence $C(S^2_{pq}(k,l^\pm))/(J_A^\pm\oplus J_B^\pm)\cong C(S^1)$. Applying $\pi^{\pm 1}_s$, $\pi^{\pm 2}_t$ to $\Jj_A^\pm \oplus \Jj_B^\pm$, one finds that  
the images contain only compact operators on the corresponding representation spaces $\cV_s$, $\cV_t$. On the other hand, these images contain all 
projections onto one-dimensional spaces generated by basis elements, 
and all step-by-one operators with non-zero weights, hence their completions contain all compact operators. In this way 
$J_A^\pm\oplus J_B^\pm$ can be identified with the direct sum $\cK^{k+|l|}$. 

Since the direct sum of all irreducible representations of a $C^*$-algebra is faithful, 
$$
\bigoplus_{s=0}^{l-1} \pi^{\pm 1}_s \oplus \bigoplus_{t=0}^{k-1} \pi^{\pm 2}_t \oplus \bigoplus_{\lambda\in S^1} \pi^{\pm}_\lambda,
$$ 
are faithful representations of $C(S^2_{pq}(k,l^\pm))$. In the same way as in \cite{She:Poi}, $\pi^\pm_\lambda$ factor through $\pi^{\pm 1}_s$, $\pi^{\pm 2}_t$, hence also 
$$
\pi^{\pm}:= \bigoplus_{s=0}^{l-1} \pi^{\pm 1}_s \oplus \bigoplus_{t=0}^{k-1} \pi^{\pm 2}_t 
$$
are faithful.  It is clear that the images of $\pi^{\pm}$
are contained in $\cT^{k+|l|}$. On the other hand, by inspecting formulae in Proposition~\ref{prop.rep} one easily finds that $\pi^{1}_s(C_+) - U$, $\pi^{-1}_s(C_-) - U^*$, $\pi^{\pm 2}_t(C_\pm) -U^*$  are step-by-one operators with coefficients tending to zero. Therefore, they are compact operators and thus are in the kernel of the symbol map. This implies that  the image of $\pi^{\pm}$ is contained in $\cT^{k,l}$. Summarising the above discussion we obtain commutative diagram with exact rows
$$
\xymatrix{0 \ar[r] & \cK^{\oplus k +|l|} \ar[r]\ar@{=}[d] &C(S^2_{pq}(k,l^\pm))\ar[r]\ar@^{(->}[d]_{\pi^\pm} & C(S^1)\ar@{=}[d]  \ar[r] & 0\\
0 \ar[r] &\cK^{\oplus k+|l|}\ar[r] & \Tt^{k,l}\ar[r] & C(S^1) \ar[r] & 0.}
$$
This implies the isomorphism \eqref{s.expl}.
\end{proof}

We note in passing that the $K^0$-groups of $C(S^2_{pq}(k,l^\pm))$ are dependent on or detect the weights. This is in contrast to the classical (commutative) situation in which, irrespective of weights, the $K^0$-group of the complex weighted projective  line is equal to $\ZZ^2$ \cite{AlA:com}.

Finally, the description of $C(S^2_{pq}(k,l^\pm))$ in terms of pullbacks $\cT^{k,l}$ of Toeplitz algebras gives one an opportunity to follow \cite{Wag:fib} and construct  Fredholm modules over and compute the index pairing for $C(S^2_{pq}(k,l^\pm))$ in a more straightforward way than in the algebraic setup of Section~\ref{sec.Fred}. In particular, the very definition of $\cT^{k,l}$ allows one to combine pairs of projections
$$
\pr_{i}: \cT^{k,l} \to \cT, \qquad (x_1, \ldots, x_{k+|l|})\mapsto x_i,
$$
into Fredholm modules $(\pr_i,\pr_j)$, with no restriction on $i$ and $j$ for a negative $l$, and with $i,j \leq l$ or $i,j >l$, for a positive $l$, since the $\pr_i - \pr_j$ have their images in compact operators. We believe that these topological aspects of the quantum weighted Heegaard spheres are worth indpendent study, which however goes beyond the scope of the present Letter.

\section*{Acknowledgements} 
We would like to express our gratitude to the referees, whose valuable, insightful and detailed comments led to improvements in both the contents and presentation of this paper.


\begin{thebibliography}{99}{} 

\bibitem{AlA:com} A.\ Al Amrani, {\em Complex K-theory of weighted projective spaces}, J.\ Pure Appl.\ Alg. {\bf 93} (1994), 113--127.

\bibitem{BauHaj:Hee} P. Baum, P.M. Hajac, R. Matthes \& W. Szyma\'nski {\em The K-theory of Heegaard-type quantum 3-spheres}, K-Theory {\bf 35} (2005), 159--186.

\bibitem{Bav:gweyl} V.V. Bavula, {\em Tensor homological minimal algebras, global dimension of the tensor product of algebras and of generalized Weyl algebras},  Bull.\ Sci.\ Math.\ {\bf 120} (1996),  293--335.

\bibitem{Brz:sei} T. Brzezi\'nski, {\em Circle actions on a quantum Seifert manifold}, Proceedings of Science (CORFU2011) (2012), 055.

\bibitem{BrzFai:tea} T. Brzezi\'nski \& S.A. Fairfax, {\em Quantum teardrops}, Commun.\ Math.\ Phys.\ {\bf 316} (2012), 151--170. 

\bibitem{BrzFai:rps} T. Brzezi\'nski \& S.A. Fairfax, {\em Quantum principal bundles over quantum real projective spaces}, Axioms {\bf 1} (2012), 201--225.

\bibitem{BrzHaj:Che} T.\ Brzezi\'nski \& P.M.\ Hajac, {\em The Chern-Galois character},
Comptes Rendus Math. (Acad.\ Sci.\ Paris Ser.\ I)  {\bf 338} (2004), 113--116.


\bibitem{BudKon:pri} R.J.\ Budzy\'nski \& W.\ Kondracki, {\em Quantum principal fibre bundles: topological aspects,} Rep.\ Math.\ Phys.\  {\bf 37} (1996), 365--385.

\bibitem{CalTu:cur} A.\ C\u ald\u araru \& J.\ Tu, {\em Curved $A_\infty$-algebras and Landau-Ginzburg models}, Preprint arXiv:1007.2679, 2010.

\bibitem{CalMat:cov}  D.\ Calow \&  R.\ Matthes,
{\it Covering and gluing of algebras and differential algebras,}
J.\ Geom.\ Phys.\ {\bf 32}  (2000), 364--396.

\bibitem{CalMat:con} D.\ Calow \&  R.\ Matthes,
{\it Connections on locally trivial quantum
principal fibre bundles,} J.\ Geom.\ Phys.\ {\bf 41} (2002), 114--165.

\bibitem{Con:non}
A.\ Connes, {\em Noncommutative Geometry,} Academic Press, New York 1994.

\bibitem{DabGro:str} { L.\ D\c{a}browski, H.\ Grosse \& P.M.\ Hajac},
{\em Strong connections and Chern-Connes pairing in the Hopf-Galois theory},
Comm.~Math.~Phys.\ {\bf 220} (2001), 301--331.


\bibitem{HajKra:pie} P.M.~Hajac, U.~Kr\"{a}hmer, R.~Matthes \& B.~Zieli\'nski {\em Piecewise principal comodule algebras}, J.\ Noncommut.\ Geom.\ {\bf 5} (2011), 591--614.

\bibitem{HajMat:rea} P.M.\ Hajac, R.\ Matthes \& W.\ Szyma\'nski,  {\em Quantum real projective space, disc and spheres}.
Algebr.\ Represent.\ Theory {\bf 6} (2003), 169--192. 

\bibitem{HajMat:loc} P.M.\ Hajac, R.\ Matthes \& W.\ Szyma\'nski {\em A locally trivial quantum Hopf fibration}, Alg.\ Rep.\ Theory {\bf 9}  (2006), 121--146.

\bibitem{HajMat:ind} P.M.\ Hajac, R.\ Matthes \& W.\ Szyma\'nski {\em Noncommutative index theory of mirror quantum spheres}, Comptes Rendus Math. (Acad.\ Sci.\ Paris Ser.\ I) {\bf 343} (2006), 731--736.

\bibitem{HajRen:len} P.M. Hajac, A. Rennie \& B. Zieli\'nski, {\em The K-theory of Heegaard quantum lens spaces}, Preprint arXiv:1110.5897 (2011).

\bibitem{HonSzy:len} J.H.\ Hong \& W.\ Szyma\'nski, {\em Quantum lens spaces and graph algebras}, Pacific J.\ Math.\ 211 (2003), 249--263.

\bibitem{Pod:sph}
P.\ Podle\'s,  {\em Quantum spheres,} { Lett.\ Math.\ Phys.}, {\bf 14} (1987),193--202.


\bibitem{She:Poi} A.J.-L.\ Sheu, {\em Quantization of the Poisson SU(2) and its Poisson homogeneous space -- the 2-sphere}, Comm.\ Math.\ Phys.\ {\bf 135} (1991), 217--232.

\bibitem{Wag:fib} E.\ Wagner, {\em Fibre product approach to index pairings for the generic Hopf fibration of $SU_q(2)$}, J.\ K-Theory, {\bf 7} (2011), 1--17.

\bibitem{Zie:loc} B.\ Zieli\'nski, {\em Locally coalgebra-Galois extensions,} Preprint  arXiv:math/0512150.


\end{thebibliography}
\end{document}